\documentclass[10pt,a4paper]{amsart}
\usepackage{fix-cm}
\usepackage{amsmath,amssymb,amsthm}
\usepackage{microtype}
\usepackage{cite}
\usepackage[hidelinks]{hyperref}
\usepackage{enumitem}
\usepackage{graphicx}
\usepackage{pgf}

\newtheorem{theorem}{Theorem}[section]
\newtheorem{lemma}[theorem]{Lemma}
\newtheorem{corollary}[theorem]{Corollary}
\newtheorem{proposition}[theorem]{Proposition}
\theoremstyle{definition}
\newtheorem{definition}[theorem]{Definition}
\newtheorem{example}[theorem]{Example}
\theoremstyle{remark}
\newtheorem{remark}[theorem]{Remark}
\numberwithin{equation}{section}

\title[Finite-Step Characterizations of Discrete General Helices]
{Intrinsic Finite-Step Characterizations of Discrete General Helices}

\author{Dae Won Yoon}
\address{Department of Mathematics Education and RINS, Gyeongsang National University,
Jinju 52828, Republic of Korea}
\email{dwyoon@gnu.ac.kr}

\author{Chul Woo Lee}
\address{Department of Mathematics, Kyungpook National University,
Daegu 41566, Republic of Korea}
\email{mathisu@knu.ac.kr}

\author{Jae Won Lee}
\address{Department of Mathematics Education and RINS, Gyeongsang National University,
Jinju 52828, Republic of Korea}
\email{leejaew@gnu.ac.kr}
\thanks{Corresponding author: Jae Won Lee (\texttt{leejaew@gnu.ac.kr}).}

\subjclass[2000]{Primary 53A04; Secondary 53A70}
\keywords{Discrete general helix; polygonal space curve; discrete Frenet frame; turning angle; signed torsion angle; tangent indicatrix}

\hypersetup{
  pdftitle={Intrinsic Finite-Step Characterizations of Discrete General Helices},
  pdfauthor={Dae Won Yoon, Chul Woo Lee, Jae Won Lee}
}

\begin{document}

\begin{abstract}
We study polygonal general helices in Euclidean three-space using the turning and signed torsion angles of the discrete Frenet frame. For helices whose axis is not orthogonal to the edge tangents, we prove that the global constant-angle condition is equivalent to the existence of a conserved Frenet-frame vector. On the generic branch, elimination of the auxiliary coefficient yields an intrinsic finite-step compatibility relation involving three consecutive turning angles and two consecutive torsion angles. Complementary phase and linear-subspace formulations cover the antipodal-binormal case. These characterizations reconstruct the helical axis and the helix angle and yield a sharp bound for each turning angle. We also give a spherical formulation through the tangent indicatrix: its vertices lie on a plane section of the unit sphere, which is a small circle in the non-orthogonal case, whereas the orthogonal case is exactly planar. A nonconstant Frenet-data example illustrates the criterion. Finally, for uniform chordal sampling of a smooth curve, the discrete Lancret-type quotient and the reconstructed helical direction converge with second-order accuracy.
\end{abstract}

\maketitle

\section{Introduction}

A smooth general helix in Euclidean three-space is a regular curve whose unit tangent makes a constant angle with a fixed direction. On an interval where $\kappa\tau\ne0$, Lancret's theorem says that this is equivalent to the constancy of $\tau/\kappa$, or equivalently $\kappa/\tau$ \cite{Struik1988}. There are also versions of Lancret's theorem in other spaces. For example, \c{C}ift\c{c}i considered general helices in three-dimensional Lie groups with a bi-invariant metric \cite{Ciftci2009}.

There are several ways to define curvature and torsion for polygonal curves. Frenet-frame methods, cross-ratio methods, and integrable discrete curve models are all used in the literature \cite{Carroll2014,MullerVaxman2021,Hoffmann2026,Kaji2026}. M\"uller and Vaxman define discrete curvature and torsion by cross-ratios and study their smooth limits \cite{MullerVaxman2021}. Kaji, Kajiwara and Shigetomi use an $SO(3)$ discrete Frenet evolution with signed curvature and torsion angles in their work on kaleidocycles \cite{Kaji2026}. Tellier, Douthe, Hauswirth and Baverel use a related notion of a discrete general helix, namely a polyline whose segments make a constant angle with a reference plane \cite{Tellier2019}. If $U$ is a unit normal to that plane, their unsigned condition is
\[
|T_i\cdot U|=\mathrm{constant}.
\]
The condition used here, $T_i\cdot U=c$, is stronger unless the signs of $T_i\cdot U$ are chosen consistently along the polygon. For example, a tangent sequence that alternates between the two small circles $X\cdot U=\pm c$ satisfies the unsigned condition but does not satisfy Definition~\ref{def:discrete-general-helix} for that $U$.

In this paper we use the usual polygonal Frenet data: the turning angles $\theta_i$ and the signed torsion angles $\phi_i$. We ask when the condition
\[
T_i\cdot U=c
\]
can be checked from these angles alone. For the case $c\ne0$, put
\[
x_i=\tan\frac{\theta_i}{2},
\qquad
Q_i=T_i-x_iN_i+y_iB_i.
\]
We show that the constant-angle condition is equivalent to the existence of coefficients $y_i$ for which $Q_i$ is independent of $i$. If $\phi_i\notin\{0,\pi\}$, the coefficients can be eliminated and one obtains
\[
\frac{x_{i+2}+x_{i+1}\cos\phi_{i+1}}{\sin\phi_{i+1}}
=
\frac{x_i+x_{i+1}\cos\phi_i}{\sin\phi_i}.
\]
The undivided equations for $Q_i$ also cover the case $\phi_i=\pi$.

We first prove the conserved-vector characterization of the non-orthogonal case. On the branch $\phi_i\notin\{0,\pi\}$, eliminating the auxiliary coefficients gives $n-3$ scalar finite-step equations. For the full range $\phi_i\in(-\pi,\pi]$, we use a phase description and a linear criterion. In the linear form, for fixed torsion data the vector
\[
x=(x_1,\ldots,x_{n-1})^{\mathsf T}
\]
belongs to the span of two vectors determined by the torsion angles. The formulas also determine the helical direction and angle and give $\theta_i\le2\alpha$. We then study the smooth limit. Under uniform arclength sampling, the discrete quotient converges to $\kappa/\tau$ with order two, while its normalized finite difference converges to $(\kappa/\tau)'$.

The results below use this Frenet-angle convention. In the cited literature, we have not found the same finite-step criterion or the same full linear criterion for variable polygonal Frenet data. The discrete Frenet frame itself, and the extrinsic definition of a general helix, are not new. Some background on discrete curves and discrete differential geometry is given in \cite{BobenkoSuris2008,Hoffmann2009,Carroll2014}.

Section~2 fixes the Frenet conventions. Section~3 gives the spherical characterization and the conserved-vector equations. Section~4 proves the finite-step, phase, and linear characterizations, gives reconstruction formulas and examples, and studies which prescribed torsion-angle data can occur. Section~5 treats the planar case and the smooth limit. The last section collects the main conclusions.

\section{Discrete Frenet data}

Let
\[
P=(P_0,P_1,\ldots,P_n)\subset\mathbb R^3,\qquad n\ge 3,
\]
be a polygonal curve with \(P_{i+1}\neq P_i\). Its unit edge tangent is
\[
T_i=\frac{P_{i+1}-P_i}{\|P_{i+1}-P_i\|},
\qquad 0\le i\le n-1.
\]

For \(1\le i\le n-1\), assume that \(T_{i-1}\) and \(T_i\) are neither
parallel nor antiparallel, and define
\[
0<\theta_i<\pi,\qquad T_{i-1}\cdot T_i=\cos\theta_i.
\]
For these interior indices $1\le i\le n-1$, the discrete binormal and normal are
\[
B_i=\frac{T_{i-1}\times T_i}{\|T_{i-1}\times T_i\|},
\qquad
N_i=B_i\times T_i.
\]
Thus the Frenet frame \((T_i,N_i,B_i)\) is defined for $1\le i\le n-1$, is positively oriented and orthonormal, and
\begin{equation}
T_{i-1}=\cos\theta_i\,T_i-\sin\theta_i\,N_i.
\label{eq:backward-tangent}
\end{equation}

For \(1\le i\le n-2\), the signed torsion angle
\(\phi_i\in(-\pi,\pi]\) is uniquely fixed by the convention
\begin{equation}
B_{i+1}=-\sin\phi_i\,N_i+\cos\phi_i\,B_i.
\label{eq:torsion-convention}
\end{equation}
With this convention,
\begin{align}
T_{i+1}
&=\cos\theta_{i+1}T_i
+\sin\theta_{i+1}\cos\phi_i\,N_i
+\sin\theta_{i+1}\sin\phi_i\,B_i,
\label{eq:T-transition}\\
N_{i+1}
&=-\sin\theta_{i+1}T_i
+\cos\theta_{i+1}\cos\phi_i\,N_i
+\cos\theta_{i+1}\sin\phi_i\,B_i.
\label{eq:N-transition}
\end{align}
Equivalently,
\[
(T_{i+1},N_{i+1},B_{i+1})=(T_i,N_i,B_i)F_i,
\]
where
\[
F_i=
\begin{pmatrix}
\cos\theta_{i+1}&-\sin\theta_{i+1}&0\\
\sin\theta_{i+1}\cos\phi_i&
\cos\theta_{i+1}\cos\phi_i&
-\sin\phi_i\\
\sin\theta_{i+1}\sin\phi_i&
\cos\theta_{i+1}\sin\phi_i&
\cos\phi_i
\end{pmatrix}\in SO(3).
\]
The endpoint value $\phi_i=\pi$ occurs exactly when $B_{i+1}=-B_i$.  For general Frenet data this means that $T_{i-1},T_i,T_{i+1}$ are coplanar and that the oriented binormal reverses; it does not by itself force $T_{i+1}=T_{i-1}$.  If, in addition, the constant-angle condition holds with $c\ne0$, the compatibility equations (see Lemma~\ref{lem:Q-transition}) force $\theta_{i+1}=\theta_i$, and then $T_{i+1}=T_{i-1}$.  Thus $\phi_i=\pi$ is allowed in our convention.

The transfer-matrix description of discrete Frenet frames is standard; see, for example, Hu--Lundgren--Niemi~\cite{HuLundgrenNiemi2011} and the discussion in \cite{Hoffmann2026}. In the convention used here, positive edge lengths together with the turning angles and signed torsion angles determine the polygon, up to an orientation-preserving Euclidean motion, by iterating the matrices $F_i$ and then summing the edge vectors. The orientation convention is reproduced by the reconstructed polygon, because \eqref{eq:T-transition} and \eqref{eq:torsion-convention} give
\[
T_i\times T_{i+1}
=\sin\theta_{i+1}
\bigl(\cos\phi_i\,B_i-\sin\phi_i\,N_i\bigr)
=\sin\theta_{i+1}B_{i+1}.
\]
Since $0<\theta_{i+1}<\pi$, normalizing the cross product gives the prescribed oriented binormal $B_{i+1}$. Conversely, every polygon satisfying the standing assumptions gives such data.

\begin{example}\label{ex:frame-basic}
Take
\[
T_0=\left(\frac45,0,\frac35\right),
\qquad
T_1=\left(\frac{16}{25},\frac{12}{25},\frac35\right).
\]
Both vectors are unit vectors and
\[
T_0\cdot T_1=\frac{109}{125}.
\]
Hence
\[
\theta_1=\arccos\frac{109}{125}\in(0,\pi),
\]
so the discrete Frenet frame is well defined at this vertex; see Figure~\ref{fig:frenet-frame}.
\end{example}

\begin{figure}[!ht]
\centering
\resizebox{0.95\textwidth}{!}{\input{dgh-fig01.pgf}}
\caption{The discrete Frenet frame of Example~\ref{ex:frame-basic}. Left: inside the plane spanned by $T_0$ and $T_1$, equation~\eqref{eq:backward-tangent} reads $T_0=\cos\theta_1\,T_1-\sin\theta_1\,N_1$ with $\theta_1=\arccos(109/125)\approx0.5115$, and the binormal $B_1$ points out of the page. Right: the same data in space at the vertex $P_1$ of a polygon with unit edges, showing the positively oriented orthonormal frame $(T_1,N_1,B_1)$.}
\label{fig:frenet-frame}
\end{figure}

\section{General helices and conserved vectors}

\begin{definition}\label{def:discrete-general-helix}
A polygonal curve is called a discrete general helix, in the tangent-angle sense used here, if there exist a fixed unit vector \(U\) and a constant \(c\) such that
\[
T_i\cdot U=c
\]
for every edge tangent $T_i$, $0\le i\le n-1$. We call the fixed unit vector $U$ the \emph{helical direction}; equivalently, the one-dimensional subspace $\mathbb R U$ is the axis direction. This terminology does not select a distinguished affine line in $\mathbb R^3$.
\end{definition}

\begin{proposition}[Spherical tangent-indicatrix characterization]\label{prop:spherical-characterization}
Let \(T_0,\ldots,T_{n-1}\in S^2\) be the unit edge tangents of a polygon.
Then the polygon is a discrete general helix if and only if its tangent
indicatrix is contained in the intersection
\[
S^2\cap\{X\in\mathbb R^3:X\cdot U=c\}
\]
for some unit vector \(U\) and some \(c\in[-1,1]\).  If \(0<|c|<1\),
this intersection is a small circle on \(S^2\); if \(c=0\), it is a
great circle; and if $|c|=1$, it consists of the single point $\{cU\}$.  Under the standing assumptions $n\ge3$ and $0<\theta_i<\pi$, the last case cannot occur.
\end{proposition}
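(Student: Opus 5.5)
The equivalence is essentially a restatement of Definition~\ref{def:discrete-general-helix}, so I would begin there. If the polygon is a discrete general helix with data $(U,c)$, then every $T_i$ is a unit vector with $T_i\cdot U=c$. Hence each $T_i$ lies in $S^2\cap\{X:X\cdot U=c\}$. Since $|c|=|T_0\cdot U|\le\|T_0\|\|U\|=1$ by Cauchy--Schwarz, we automatically have $c\in[-1,1]$. Conversely, if all $T_i$ lie in that intersection for some unit $U$ and some $c$, then $T_i\cdot U=c$ for every $0\le i\le n-1$, which is exactly the definition. No Frenet data enter this part.

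Next I would describe the intersection $S^2\cap\{X\cdot U=c\}$ in each case. I would write $X=cU+W$ with $W\perp U$. Then $\|X\|=1$ becomes $\|W\|^2=1-c^2$, so the intersection is the circle of radius $\sqrt{1-c^2}$ centred at $cU$ in the affine plane $X\cdot U=c$. If $0<|c|<1$, the plane misses the origin and the radius is positive, so we get a small circle. If $c=0$, the plane passes through the origin and the radius is $1$, so we get a great circle. If $|c|=1$, the radius is $0$ and the set is $\{cU\}$.

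Finally, I would exclude $|c|=1$. In that case every $T_i$ would equal $cU$, so in particular $T_0=T_1$. This gives $T_0\cdot T_1=1$, hence $\theta_1=0$, which contradicts the standing assumption $0<\theta_1<\pi$. The index $i=1$ is available because $n\ge3$.

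There is no substantive obstacle here. The only point needing care is checking that the standing hypotheses supply at least one interior turning angle, so that the degenerate case is genuinely ruled out.
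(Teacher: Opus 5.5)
Your proposal is correct and takes essentially the same route as the paper. You unwind Definition~\ref{def:discrete-general-helix} for the equivalence, describe the plane section of $S^2$ through the distance $|c|$ of the cutting plane from the origin, and rule out $|c|=1$ because every tangent would then equal $cU$, contradicting $0<\theta_1<\pi$; your explicit decomposition $X=cU+W$ and your Cauchy--Schwarz check that $c\in[-1,1]$ is automatic add detail the paper leaves implicit.
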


\begin{proof}
By definition, the polygon is a discrete general helix when
there are a fixed unit vector \(U\) and a constant \(c\) such that
\(T_i\cdot U=c\) for every \(i\).  Since every \(T_i\) belongs to \(S^2\),
this is exactly the stated containment.  Conversely, containment in such
an intersection gives \(T_i\cdot U=c\) for every tangent.  The geometric
description of the intersection follows from the distance \(|c|\) of the
cutting plane from the origin.  When $|c|=1$, equality in Cauchy--Schwarz
forces every tangent in the intersection to equal $\operatorname{sgn}(c)U$;
this is incompatible with $0<\theta_i<\pi$ for consecutive tangents.
\end{proof}

\begin{example}\label{ex:indicatrix}
For the tangents used later in Example~\ref{ex:nonconstant-helix},
\[
T_i\cdot(0,0,1)=\frac35.
\]
Hence their tangent indicatrix lies on
\[
S^2\cap\{z=3/5\},
\]
a small circle of Euclidean radius \(4/5\).  This gives a direct
spherical visualization of the fixed-direction condition; see Figure~\ref{fig:tangent-indicatrix}.
\end{example}

\begin{example}[An example with equally spaced azimuths]\label{ex:symmetric-visual-helix}
Let $U=(0,0,1)$ and, for $0\le k\le11$, set
\[
T_k=\left(\frac45\cos\frac{k\pi}{3},\frac45\sin\frac{k\pi}{3},\frac35\right),
\qquad P_0=0,\qquad P_{k+1}=P_k+T_k.
\]
Then $T_k\cdot U=3/5$ for every $k$, so this is a discrete general helix. Moreover the horizontal components of $T_0,\ldots,T_5$ sum to zero, and hence
\[
P_{k+6}=P_k+\left(0,0,\frac{18}{5}\right).
\]
Thus the polygon repeats after a vertical translation and has the familiar screw-like appearance shown in the right panel of Figure~\ref{fig:tangent-indicatrix}. Its Frenet data are constant:
\[
\theta_k=\arccos\frac{17}{25}\approx0.823033692,
\qquad
\phi_k\approx0.666946345.
\]
This is a symmetric example. A general helix in the tangent-angle sense need not have this rotational symmetry; see Examples~\ref{ex:pi-helix} and~\ref{ex:nonconstant-helix}.
\end{example}

\begin{figure}[!ht]
\centering
\resizebox{0.97\textwidth}{!}{\input{dgh-fig02.pgf}}
\caption{Two views of the condition $T_i\cdot U=3/5$. Left: the tangent indicatrix of Example~\ref{ex:nonconstant-helix} lies on the small circle $S^2\cap\{X\cdot U=3/5\}$ of Euclidean radius $4/5$. Right: the equally spaced-azimuth example of Example~\ref{ex:symmetric-visual-helix}; its polygon visibly winds upward and repeats after a translation parallel to $U$. The right panel is a symmetric special case; the definition only requires a fixed tangent angle.}
\label{fig:tangent-indicatrix}
\end{figure}

We first treat the spatial case \(c\neq0\). Put
\begin{equation}
x_i=\tan\frac{\theta_i}{2}.
\label{eq:x-def}
\end{equation}
Since \(0<\theta_i<\pi\), one has \(x_i>0\).

Suppose \(T_i\cdot U=c\neq0\). Write
\[
U=a_iT_i+b_iN_i+d_iB_i.
\]
Then \(a_i=c\).  Since \(U\cdot T_{i-1}=c\), equation
\eqref{eq:backward-tangent} gives
\[
c=c\cos\theta_i-b_i\sin\theta_i.
\]
Thus
\[
b_i=-c\frac{1-\cos\theta_i}{\sin\theta_i}
=-c\tan\frac{\theta_i}{2}=-cx_i.
\]
Consequently,
\begin{equation}
U=cQ_i,\qquad
Q_i=T_i-x_iN_i+y_iB_i,
\label{eq:Qi}
\end{equation}
where \(y_i=d_i/c\), and hence $Q_i$ is defined for the interior Frenet indices $1\le i\le n-1$.

\begin{lemma}[Frenet transition for the conserved-vector ansatz]\label{lem:Q-transition}
For arbitrary real coefficients $y_i$ and $x_i=\tan(\theta_i/2)$, set
\[
Q_i=T_i-x_iN_i+y_iB_i,\qquad 1\le i\le n-1.
\]
For every $1\le i\le n-2$, the Frenet transition gives
\begin{equation}
\begin{aligned}
Q_{i+1}=T_i
&+\bigl(x_{i+1}\cos\phi_i-y_{i+1}\sin\phi_i\bigr)N_i\\
&+\bigl(x_{i+1}\sin\phi_i+y_{i+1}\cos\phi_i\bigr)B_i.
\end{aligned}
\label{eq:Q-transition}
\end{equation}
Hence $Q_{i+1}=Q_i$ if and only if
\begin{align}
-x_i&=x_{i+1}\cos\phi_i-y_{i+1}\sin\phi_i,\label{eq:Q-compat1}\\
y_i&=x_{i+1}\sin\phi_i+y_{i+1}\cos\phi_i.\label{eq:Q-compat2}
\end{align}
No division by $\sin\phi_i$ is used in these identities, so they remain valid for $\phi_i=\pi$.
\end{lemma}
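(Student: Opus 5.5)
The plan is a direct substitution: expand $Q_{i+1}=T_{i+1}-x_{i+1}N_{i+1}+y_{i+1}B_{i+1}$ in the frame $(T_i,N_i,B_i)$, using the transition formulas \eqref{eq:T-transition}, \eqref{eq:N-transition} and the torsion convention \eqref{eq:torsion-convention}, and then collect coefficients.

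\emph{The $T_i$-coefficient.} It equals
\[
\cos\theta_{i+1}+x_{i+1}\sin\theta_{i+1}.
\]
Writing $\theta=\theta_{i+1}$ and using the half-angle identity $x_{i+1}=\tan(\theta/2)=(1-\cos\theta)/\sin\theta$, valid because $0<\theta<\pi$, we get $x_{i+1}\sin\theta=1-\cos\theta$. So the $T_i$-coefficient is exactly $1$.

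\emph{The $N_i$-coefficient.} It equals
\[
\sin\theta\cos\phi_i-x_{i+1}\cos\theta\cos\phi_i-y_{i+1}\sin\phi_i.
\]
Factor $\cos\phi_i$ from the first two terms, which gives $\cos\phi_i(\sin\theta-x_{i+1}\cos\theta)$. We need the identity $\sin\theta-\tan(\theta/2)\cos\theta=\tan(\theta/2)$. It follows from
\[
\sin\theta=\tan(\theta/2)\,(1+\cos\theta),
\]
which is the other half-angle form. Hence the $N_i$-coefficient is $x_{i+1}\cos\phi_i-y_{i+1}\sin\phi_i$.

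\emph{The $B_i$-coefficient.} The same identity applied to the $\sin\phi_i$ terms gives $x_{i+1}\sin\phi_i$, and the term from $y_{i+1}B_{i+1}$ adds $y_{i+1}\cos\phi_i$. Together these establish \eqref{eq:Q-transition}.

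\emph{The compatibility equations.} Since $(T_i,N_i,B_i)$ is an orthonormal basis, $Q_{i+1}=Q_i$ holds if and only if the coordinates agree. The $T_i$-coordinates agree automatically, both being $1$. Comparing the $N_i$- and $B_i$-coordinates with those of $Q_i$, namely $-x_i$ and $y_i$, gives \eqref{eq:Q-compat1} and \eqref{eq:Q-compat2}.

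No step divides by $\sin\phi_i$. The only divisions are by $\sin\theta_{i+1}$ and $1+\cos\theta_{i+1}$, both nonzero under the standing assumption $0<\theta_{i+1}<\pi$. So the identities hold for every $\phi_i\in(-\pi,\pi]$, including $\phi_i=\pi$.

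The computation has no real obstacle. The only point needing care is the trigonometric simplification of the $N_i$- and $B_i$-coefficients via $\sin\theta=\tan(\theta/2)(1+\cos\theta)$, together with keeping the sign conventions of \eqref{eq:torsion-convention} consistent.
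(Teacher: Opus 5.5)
Your proposal is correct and follows essentially the same route as the paper. Both substitute \eqref{eq:torsion-convention}--\eqref{eq:N-transition} into $Q_{i+1}$, reduce the coefficients with the half-angle identities $\cos\theta+x\sin\theta=1$ and $\sin\theta-x\cos\theta=x$, and then compare $N_i$- and $B_i$-coordinates in the orthonormal frame.
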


\begin{proof}
Substitute the transition formulas \eqref{eq:torsion-convention}--\eqref{eq:N-transition} into $Q_{i+1}$.  The half-angle identities
\[
\cos\theta+x\sin\theta=1,\qquad
\sin\theta-x\cos\theta=x,
\qquad x=\tan\frac{\theta}{2},
\]
reduce its $T_i$-coefficient to $1$ and give \eqref{eq:Q-transition}.  Comparison with $Q_i=T_i-x_iN_i+y_iB_i$ proves the equivalence.
\end{proof}

\begin{proposition}[Conserved-vector formulation]\label{prop:conserved-vector}
Assume the standing turning-angle hypotheses and
\[
\phi_i\in(-\pi,\pi],\qquad 1\le i\le n-2.
\]
Set $x_i=\tan(\theta_i/2)$. Then the following are equivalent:
\begin{enumerate}[label=\textup{(\alph*)}]
\item the polygon is a discrete general helix with a non-orthogonal axis;
\item there exist real coefficients $y_i$, $1\le i\le n-1$, satisfying
\[
-x_i=x_{i+1}\cos\phi_i-y_{i+1}\sin\phi_i,\qquad
y_i=x_{i+1}\sin\phi_i+y_{i+1}\cos\phi_i;
\]
\item there exist real coefficients $y_i$, $1\le i\le n-1$, such that the Frenet-frame vector
\[
Q_i=T_i-x_iN_i+y_iB_i
\]
is independent of $i$.
\end{enumerate}
In this case $Q\cdot T_i=1$ for every edge tangent, including $T_0$, and the axis direction is the one-dimensional subspace $\mathbb R Q$; equivalently, the oriented helical direction is $Q/\lVert Q\rVert$. If at least one $\phi_i\ne\pi$, the compatible coefficients $y_i$ are unique; on the fully alternating locus $\phi_i=\pi$ for every $i$, they form a one-parameter family.
\end{proposition}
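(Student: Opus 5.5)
The plan is to prove the cycle (a)$\Rightarrow$(c)$\Leftrightarrow$(b)$\Rightarrow$(a), and then handle the uniqueness statement separately. Lemma~\ref{lem:Q-transition} gives (b)$\Leftrightarrow$(c) directly: $Q_{i+1}=Q_i$ for every $1\le i\le n-2$ is equivalent to \eqref{eq:Q-compat1}--\eqref{eq:Q-compat2} for every such $i$. Since the lemma never divides by $\sin\phi_i$, this equivalence holds on the full range $\phi_i\in(-\pi,\pi]$.

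For (a)$\Rightarrow$(c), we use the computation preceding \eqref{eq:Qi}. If $T_i\cdot U=c\ne0$ for all $i$, then expanding $U$ in the frame at each interior index $i$ gives $U=cQ_i$ with $y_i=d_i/c$. Hence $Q_i=U/c$ is independent of $i$.

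For (c)$\Rightarrow$(a), let $Q$ be the common value of the $Q_i$.
\begin{itemize}
\item At each interior $i$ we have $Q\cdot T_i=1$ directly from orthonormality.
\item For $T_0$, we write $T_0=\cos\theta_1T_1-\sin\theta_1N_1$ by \eqref{eq:backward-tangent}. Then $Q\cdot T_0=Q_1\cdot T_0=\cos\theta_1+x_1\sin\theta_1=1$ by the half-angle identity.
\item Since $Q\cdot T_1=1$, we have $Q\neq0$. Setting $U=Q/\|Q\|$ and $c=1/\|Q\|\ne0$ gives $T_i\cdot U=c$ for all $0\le i\le n-1$, which is a non-orthogonal axis.
\end{itemize}
The same computation gives the claims that $Q\cdot T_i=1$ and that the axis is $\mathbb RQ$. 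One should also note that the constructed $Q$ coincides with $U/c$. Any compatible $U$ in (a) is then determined by $Q$ up to the choice of sign of $c$, because the $T_i$ span $\mathbb R^3$ or at least a plane. I will keep this remark brief, since the statement only asserts the direction.

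The uniqueness claim is the main point needing care. Suppose two compatible families $y_i$ and $y'_i$ exist. Then $Q-Q'=(y_i-y'_i)B_i$ is a constant vector for all interior $i$. Put $\delta_i=y_i-y_i'$.
\begin{itemize}
\item Subtracting the two copies of \eqref{eq:Q-compat1} gives $\delta_{i+1}\sin\phi_i=0$.
\item Subtracting the two copies of \eqref{eq:Q-compat2} gives $\delta_i=\delta_{i+1}\cos\phi_i$.
\end{itemize}
If $\phi_j\ne\pi$ for some $j$, there are two cases. If $\sin\phi_j\ne0$, then $\delta_{j+1}=0$ and hence $\delta_j=0$. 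If $\phi_j=0$, we argue geometrically instead: $B_{j+1}=B_j$ forces $T_{j-1},T_j,T_{j+1}$ to be coplanar with the same orientation, and the constant vector $\delta_jB_j$ is unaffected by this. So this case needs more care.

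A cleaner route is to observe that $\delta_iB_i=D$ is constant, and that $D\cdot T_i=0$ for all $i$. If some $\phi_j\ne\pi$ with $\sin\phi_j\neq0$, then $B_j$ and $B_{j+1}$ are not parallel. In that case the constancy $\delta_jB_j=\delta_{j+1}B_{j+1}$ forces $D=0$, and hence $\delta_i=0$ for all $i$.

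The delicate subcase is $\phi_j=0$. Here the relations give $\delta_j=\delta_{j+1}$ but no vanishing. If every non-$\pi$ torsion angle is $0$, all $B_i=\pm B_1$, so every $T_i$ lies in the plane $B_1^\perp$. This is the planar case, and $D$ along $B_1$ is then a genuine extra freedom. I expect this subcase to be the main obstacle: the statement as written seems to need $\sin\phi_j\neq0$ for some $j$, rather than merely $\phi_j\ne\pi$. I would first check whether the standing hypotheses together with (a) exclude $\phi_j=0$ in some way. For example, a planar polygon with $c\ne0$ has all $T_i$ on the intersection of a great circle and a small circle, which is at most two points. That forces the tangents to alternate and should make $\phi=\pi$ at every index, ruling out $\phi_j=0$ under (a). With that fact the uniqueness argument closes.

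Finally, on the fully alternating locus $\phi_i=\pi$, the relations read $y_{i+1}=-y_i-x_{i+1}\cdot 0$ together with $x_i=x_{i+1}$. More precisely, \eqref{eq:Q-compat1} gives $x_{i+1}=x_i$, and \eqref{eq:Q-compat2} gives $y_i=-y_{i+1}$. So $y_1$ can be chosen freely and determines all the others, which gives the one-parameter family.
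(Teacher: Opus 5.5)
Your proof is correct. The equivalences (a)$\Rightarrow$(c)$\Leftrightarrow$(b)$\Rightarrow$(a) follow the paper's argument exactly, including the recovery of $Q\cdot T_0=1$ from \eqref{eq:backward-tangent}. The difference is in the uniqueness part.

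The paper first disposes of zero torsion. If compatible coefficients exist and $\phi_j=0$, then \eqref{eq:Q-compat1} at step $j$ reads $-x_j=x_{j+1}$, which contradicts $x_j,x_{j+1}>0$; this is the justification recorded in Remark~\ref{rem:torsion-hypothesis}. So once (b) has a solution, $\phi_j\ne\pi$ already forces $\sin\phi_j\ne0$. The two equations at step $j$ then fix $y_j,y_{j+1}$, and the recursion propagates this along the chain.

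You instead use the constant difference vector $D=\delta_iB_i$. This is a neat device: $D=0$ kills every $\delta_i$ at once, so propagation comes for free, and it shows directly that nonuniqueness forces $B_{i+1}=\pm B_i$ for all $i$. You then exclude the mixed $\{0,\pi\}$ case by the planar two-point argument: the great circle meets the noncentral plane $X\cdot U=c$ in at most two points, so the tangents alternate, $T_{i+1}=T_{i-1}$, and hence $B_{i+1}=-B_i$. This is essentially Remark~\ref{rem:branch-overlap}, and it is valid because solutions of (b) give (a).

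However, this detour is much heavier than needed. The one-line observation above removes your ``delicate subcase'' entirely. It also shows that your suspicion that the statement needs $\sin\phi_j\ne0$ rather than $\phi_j\ne\pi$ is unfounded: wherever compatible coefficients exist, the two conditions coincide.

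One small point: your aside that $U$ is fixed by $Q$ up to sign ``because the $T_i$ span a plane'' is unnecessary. $U=cQ$ with $\|U\|=1$ gives $U=\pm Q/\|Q\|$ directly.
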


\begin{proof}
If the polygon has a non-orthogonal axis, the derivation of \eqref{eq:Qi} gives $U=cQ_i$ with fixed $U$ and $c\ne0$, so $Q_i$ is independent of $i$. Lemma~\ref{lem:Q-transition} shows that this is equivalent to the two compatibility relations, proving \textup{(a)}$\Rightarrow$\textup{(c)}$\Leftrightarrow$\textup{(b)}.

Conversely, if $Q_i=Q$ is constant, then $Q\cdot T_i=1$ for $1\le i\le n-1$. Using \eqref{eq:backward-tangent} at $i=1$ and $Q\cdot N_1=-x_1$ gives
\[
Q\cdot T_0=\cos\theta_1+x_1\sin\theta_1=1.
\]
Thus $Q\ne0$ and $U=Q/\|Q\|$ satisfies $T_i\cdot U=1/\|Q\|\ne0$ for all $i$, proving \textup{(c)}$\Rightarrow$\textup{(a)}.

If some $\phi_j\ne\pi$, then in the non-orthogonal situation $\phi_j\ne0$, and the compatibility equations at index $j$ determine the adjacent coefficients uniquely because $\sin\phi_j\ne0$; the remaining equations propagate this uniqueness through the chain. If all $\phi_i=\pi$, they reduce to $x_{i+1}=x_i$ and $y_{i+1}=-y_i$, leaving $y_1$ free.
\end{proof}

\begin{remark}[Discrete and smooth conserved vectors]
For comparison, in the smooth case one has
\[
\frac{d}{ds}\left(T+\frac{\kappa}{\tau}B\right)
=
\left(\frac{\kappa}{\tau}\right)'B
\]
on intervals where \(\kappa\tau\ne0\). Hence $T+(\kappa/\tau)B$ is constant when $\kappa/\tau$ is constant. This is the smooth analogue of the discrete vector $Q_i$.
\end{remark}

\begin{example}[Conserved vector for the symmetric example]\label{ex:conserved-benchmark}
For Example~\ref{ex:symmetric-visual-helix}, $U=(0,0,1)$ and $T_i\cdot U=3/5$. Hence the conserved vector is
\[
Q=\frac{U}{3/5}=\left(0,0,\frac53\right).
\]
At every interior Frenet frame the coefficients determined by
$Q=T_i-x_iN_i+y_iB_i$ satisfy the two compatibility equations of Proposition~\ref{prop:conserved-vector}. Thus this symmetric example satisfies the same conserved-vector equations used below. The proposition does not require constant edge lengths or rotational symmetry.
\end{example}

\begin{lemma}[Compatibility lemma]\label{lem:compatibility}
Let \(x_i>0\) for \(1\le i\le n-1\), and let
\(\phi_i\in(-\pi,\pi]\setminus\{0,\pi\}\) for \(1\le i\le n-2\).
Assume
\begin{equation}
\frac{x_{i+2}+x_{i+1}\cos\phi_{i+1}}{\sin\phi_{i+1}}
=
\frac{x_i+x_{i+1}\cos\phi_i}{\sin\phi_i},
\qquad 1\le i\le n-3.
\label{eq:finite-step}
\end{equation}
Define the endpoint values
\begin{align}
y_1&=\frac{x_2+x_1\cos\phi_1}{\sin\phi_1},
\label{eq:y-left}\\
y_{n-1}&=\frac{x_{n-2}+x_{n-1}\cos\phi_{n-2}}
{\sin\phi_{n-2}},
\label{eq:y-right}
\end{align}
and, for \(2\le j\le n-2\), define \(y_j\) by either of the equal
expressions
\begin{equation}
y_j=
\frac{x_{j-1}+x_j\cos\phi_{j-1}}{\sin\phi_{j-1}}
=
\frac{x_{j+1}+x_j\cos\phi_j}{\sin\phi_j}.
\label{eq:y-interior}
\end{equation}
The equality in \eqref{eq:y-interior} is precisely
\eqref{eq:finite-step} with index \(j-1\). Then, for every
\(1\le i\le n-2\),
\begin{align}
-x_i&=x_{i+1}\cos\phi_i-y_{i+1}\sin\phi_i,
\label{eq:compat1}\\
y_i&=x_{i+1}\sin\phi_i+y_{i+1}\cos\phi_i.
\label{eq:compat2}
\end{align}
\end{lemma}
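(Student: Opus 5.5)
The plan is to read off, for each $i$ with $1\le i\le n-2$, two available formulas for the pair $(y_i,y_{i+1})$ and check \eqref{eq:compat1}--\eqref{eq:compat2} by direct trigonometric algebra. The point is that the definitions \eqref{eq:y-left}, \eqref{eq:y-right}, \eqref{eq:y-interior} give, for every index $i\in\{1,\dots,n-2\}$, both
\[
y_i=\frac{x_{i+1}+x_i\cos\phi_i}{\sin\phi_i},
\qquad
y_{i+1}=\frac{x_i+x_{i+1}\cos\phi_i}{\sin\phi_i}.
\]
For $y_i$ we use the ``right'' expression in \eqref{eq:y-interior} when $i\ge2$, and \eqref{eq:y-left} when $i=1$. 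For $y_{i+1}$ we use the ``left'' expression in \eqref{eq:y-interior} with $j=i+1$ when $i+1\le n-2$, and \eqref{eq:y-right} when $i+1=n-1$. The only bookkeeping to watch is that both boundary definitions fit this single pattern. Well-definedness of the interior $y_j$ is exactly the hypothesis \eqref{eq:finite-step} at index $j-1$, as the statement notes. Everything is legitimate since $\sin\phi_i\ne0$.

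With these two formulas in hand, \eqref{eq:compat1} follows by substitution:
\[
x_{i+1}\cos\phi_i-y_{i+1}\sin\phi_i=x_{i+1}\cos\phi_i-x_i-x_{i+1}\cos\phi_i=-x_i.
\]
For \eqref{eq:compat2}, multiply through by $\sin\phi_i$. The right side becomes
\[
x_{i+1}\sin^2\phi_i+x_i\cos\phi_i+x_{i+1}\cos^2\phi_i=x_{i+1}+x_i\cos\phi_i,
\]
which is exactly $y_i\sin\phi_i$.

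No step is a real obstacle. The one place requiring care is checking the index ranges so that the endpoint cases $i=1$ and $i=n-2$ are covered. In particular, when $n=3$ there are no interior $j$, and $y_1,y_2$ come from \eqref{eq:y-left} and \eqref{eq:y-right} alone with $i=1$. Note also that the positivity of $x_i$ is not needed for this algebra.
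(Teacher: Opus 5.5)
Your proposal is correct and follows essentially the same route as the paper: for each transition $i$ you use the right-transition formula for $y_i$ and the left-transition formula for $y_{i+1}$, obtain \eqref{eq:compat1} by multiplying by $\sin\phi_i$, and obtain \eqref{eq:compat2} from $\sin^2\phi_i+\cos^2\phi_i=1$. Your explicit check that the endpoint definitions \eqref{eq:y-left} and \eqref{eq:y-right} fit the same pattern, including the case $n=3$, is a bit more careful than the paper's one-line remark about endpoints.
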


\begin{proof}
For a fixed \(1\le i\le n-2\), the definition of \(y_{i+1}\) from the
left transition is
\[
y_{i+1}=
\frac{x_i+x_{i+1}\cos\phi_i}{\sin\phi_i}.
\]
Multiplication by \(\sin\phi_i\) immediately gives
\eqref{eq:compat1}.

The definition of \(y_i\) from the right transition is
\[
y_i=
\frac{x_{i+1}+x_i\cos\phi_i}{\sin\phi_i}.
\]
Using the preceding formula for \(y_{i+1}\),
\begin{align*}
x_{i+1}\sin\phi_i+y_{i+1}\cos\phi_i
&=
x_{i+1}\sin\phi_i+
\frac{\cos\phi_i(x_i+x_{i+1}\cos\phi_i)}{\sin\phi_i}\\
&=
\frac{x_{i+1}(\sin^2\phi_i+\cos^2\phi_i)
+x_i\cos\phi_i}{\sin\phi_i}\\
&=
\frac{x_{i+1}+x_i\cos\phi_i}{\sin\phi_i}
=y_i.
\end{align*}
This proves \eqref{eq:compat2}.  The endpoint formulas
\eqref{eq:y-left} and \eqref{eq:y-right} ensure that no nonexistent
indices \(0\) or \(n-1\) are invoked.
\end{proof}

\begin{example}\label{ex:compat-rotation}
For a single transition \(i\), the two formulas
\[
y_{i+1}=\frac{x_i+x_{i+1}\cos\phi_i}{\sin\phi_i},
\qquad
y_i=\frac{x_{i+1}+x_i\cos\phi_i}{\sin\phi_i}
\]
determine the two adjacent binormal coefficients.  The finite-step
relation is exactly the condition that the two definitions of an interior
coefficient \(y_j\) obtained from its left and right transitions coincide.  Figure~\ref{fig:compat-rotation} displays the underlying rotation.
\end{example}

\begin{figure}[!ht]
\centering
\resizebox{0.60\textwidth}{!}{\input{dgh-fig03.pgf}}
\caption{Rotation interpretation of the compatibility relations in Lemma~\ref{lem:compatibility}. In the $(N_i,B_i)$-coefficient plane, each transition is a rotation through $\phi_i$. Hence the radius $\sqrt{x_i^2+y_i^2}=\tan\alpha$ is conserved, while $x_i>0$ confines the phases to the right half-plane. The displayed data correspond to the first transition in Example~\ref{ex:nonconstant-helix}, with $\tan\alpha=4/3$ and $\phi_1=\arctan(25/86)$.}
\label{fig:compat-rotation}
\end{figure}

\section{Intrinsic characterizations and reconstruction}

\subsection{The generic local criterion}

We now eliminate the binormal coefficients from Proposition~\ref{prop:conserved-vector}. This can be done when the relevant torsion sines are nonzero.

\begin{theorem}[Finite-step characterization]\label{thm:finite-step}
Let \(P=(P_0,\ldots,P_n)\), \(n\ge3\), satisfy
\[
0<\theta_i<\pi,\qquad 1\le i\le n-1,
\]
and
\[
\phi_i\in(-\pi,\pi]\setminus\{0,\pi\},
\qquad 1\le i\le n-2.
\]
Set \(x_i=\tan(\theta_i/2)\). Then the following are equivalent:
\begin{enumerate}[label=\textup{(\roman*)}]
\item there exist a fixed unit vector \(U\) and a nonzero constant \(c\)
such that
\[
T_i\cdot U=c,\qquad 0\le i\le n-1;
\]
\item the Frenet data satisfy
\[
\boxed{
\frac{x_{i+2}+x_{i+1}\cos\phi_{i+1}}{\sin\phi_{i+1}}
=
\frac{x_i+x_{i+1}\cos\phi_i}{\sin\phi_i}
},
\qquad 1\le i\le n-3.
\]
\end{enumerate}
For \(n=3\), condition \textup{(ii)} is empty.
\end{theorem}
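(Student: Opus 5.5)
The plan is to reduce everything to Proposition~\ref{prop:conserved-vector} and Lemma~\ref{lem:compatibility}, so that the theorem becomes a purely algebraic statement about the coefficients $y_i$. Condition (i) says exactly that the polygon is a discrete general helix with a non-orthogonal axis. By Proposition~\ref{prop:conserved-vector}, this is equivalent to the existence of real numbers $y_1,\ldots,y_{n-1}$ satisfying the two compatibility relations \eqref{eq:Q-compat1}--\eqref{eq:Q-compat2} for every $1\le i\le n-2$. It therefore suffices to show that, under the hypothesis $\sin\phi_i\ne0$, such $y_i$ exist if and only if \eqref{eq:finite-step} holds.

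For (i)$\Rightarrow$(ii), I will take the coefficients $y_i$ supplied by Proposition~\ref{prop:conserved-vector}.
\begin{itemize}
\item Since $\sin\phi_i\ne0$, relation \eqref{eq:Q-compat1} can be solved for $y_{i+1}$:
\[
y_{i+1}=\frac{x_i+x_{i+1}\cos\phi_i}{\sin\phi_i}.
\]
\item Next, substitute this expression for $y_{i+1}$ into \eqref{eq:Q-compat2}. By the same computation as in the proof of Lemma~\ref{lem:compatibility}, read in reverse, this yields
\[
y_i=\frac{x_{i+1}+x_i\cos\phi_i}{\sin\phi_i}.
\]
\item For $1\le i\le n-3$, the interior coefficient $y_{i+1}$ now has two expressions: one from the transition at index $i$ and one from the transition at index $i+1$.
\item Equating these two expressions for the same number $y_{i+1}$ is precisely the boxed relation.
\end{itemize}

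For (ii)$\Rightarrow$(i), I will invoke Lemma~\ref{lem:compatibility} directly. Given \eqref{eq:finite-step}, its endpoint formulas and interior formulas define consistent coefficients $y_1,\ldots,y_{n-1}$ satisfying \eqref{eq:compat1}--\eqref{eq:compat2} for all $1\le i\le n-2$. Proposition~\ref{prop:conserved-vector}, (b)$\Rightarrow$(a), then yields $U=Q/\|Q\|$ and $c=1/\|Q\|\ne0$, where $Q\cdot T_0=1$ ensures that the condition also holds at the edge $T_0$.

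For $n=3$ there is a single transition, at $i=1$. There I will define $y_1$ and $y_2$ by the endpoint formulas \eqref{eq:y-left} and \eqref{eq:y-right}, and the same computation shows that both compatibility relations hold. So (i) holds automatically, consistent with (ii) being empty.

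The only delicate point is bookkeeping at the ends of the chain. I need each $y_j$ to be defined exactly once, or consistently, and I must not refer to the nonexistent indices $0$ or $n-1$ for $\phi$. Lemma~\ref{lem:compatibility} already handles this. The main thing to verify is that, in the forward direction, \eqref{eq:Q-compat2} genuinely implies the "right-transition" formula for $y_i$. This is the algebraic inversion of the rotation by $\phi_i$ in the $(N_i,B_i)$-coefficient plane, and it is where $\sin\phi_i\ne0$ is used.
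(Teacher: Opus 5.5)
Your proposal is correct and follows essentially the paper's own proof. For necessity, you solve the first compatibility relation for $y_{i+1}$ and equate it with the second expression for the same coefficient obtained from the next transition; for sufficiency, you build the $y_j$ via Lemma~\ref{lem:compatibility} and conclude through the conserved vector $Q$ with $Q\cdot T_0=1$. The only cosmetic difference is that you cite Proposition~\ref{prop:conserved-vector}, (b)$\Rightarrow$(a), for that last step, whereas the paper writes the same short argument out inline.
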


\begin{proof}
Assume first that \(T_i\cdot U=c\neq0\). By \eqref{eq:Qi},
\[
U=cQ_i,\qquad Q_i=T_i-x_iN_i+y_iB_i.
\]
Since \(U\) is fixed, \(Q_{i+1}=Q_i\). Lemma~\ref{lem:Q-transition} then gives, by comparison of the $N_i$- and $B_i$-components,
\begin{align}
-x_i&=x_{i+1}\cos\phi_i-y_{i+1}\sin\phi_i,\label{eq:necessity-C1}\\
y_i&=x_{i+1}\sin\phi_i+y_{i+1}\cos\phi_i.\label{eq:necessity-C2}
\end{align}
The first equation gives
\[
y_{i+1}
=
\frac{x_i+x_{i+1}\cos\phi_i}{\sin\phi_i}.
\]
For $1\le i\le n-3$, apply \eqref{eq:necessity-C1} and \eqref{eq:necessity-C2} at the next transition.  From \eqref{eq:necessity-C1} with index $i+1$,
\[
y_{i+2}=\frac{x_{i+1}+x_{i+2}\cos\phi_{i+1}}{\sin\phi_{i+1}}.
\]
Substitution into \eqref{eq:necessity-C2} with index $i+1$ gives
\begin{align*}
y_{i+1}
&=x_{i+2}\sin\phi_{i+1}+y_{i+2}\cos\phi_{i+1}\\
&=\frac{x_{i+2}+x_{i+1}\cos\phi_{i+1}}{\sin\phi_{i+1}}.
\end{align*}
Equating the two expressions for $y_{i+1}$ proves necessity.

Conversely, assume the finite-step relation. Define $y_1,\ldots,y_{n-1}$ by the endpoint and interior formulas in Lemma~\ref{lem:compatibility}. When $n=3$, there are only the two endpoint coefficients $y_1,y_2$ and no interior coefficient or scalar finite-step equation; the same lemma still gives the compatibility system. Thus, for every $n\ge3$,
\[
-x_i=x_{i+1}\cos\phi_i-y_{i+1}\sin\phi_i,
\qquad
y_i=x_{i+1}\sin\phi_i+y_{i+1}\cos\phi_i.
\]
Lemma~\ref{lem:Q-transition} therefore gives
\[
Q_{i+1}=Q_i,
\qquad
Q_i=T_i-x_iN_i+y_iB_i.
\]
Hence \(Q_i=Q\) for a fixed vector \(Q\), and orthonormality gives
\[
Q\cdot T_i=1,\qquad 1\le i\le n-1.
\]
It remains to recover the initial tangent $T_0$, because the Frenet frame is not defined at index $0$.  Using \eqref{eq:backward-tangent} with $i=1$ and $Q\cdot N_1=-x_1$,
\begin{align*}
Q\cdot T_0
&=\cos\theta_1\,(Q\cdot T_1)-\sin\theta_1\,(Q\cdot N_1)\\
&=\cos\theta_1+x_1\sin\theta_1=1,
\end{align*}
where the last equality is the half-angle identity already used above.  Thus $Q\cdot T_i=1$ for every $0\le i\le n-1$, so \(Q\neq0\). Define \(U=Q/\|Q\|\). Then
\[
T_i\cdot U=\frac1{\|Q\|},\qquad 0\le i\le n-1.
\]
Thus all edge tangents make a constant nonzero angle with the fixed direction \(U\).
\end{proof}

The theorem is the quotient form of the conserved-vector equations. In a non-orthogonal helix, $\phi_i=0$ is impossible, while $\phi_i=\pi$ is geometrically possible and gives the local backtracking relation $T_{i+1}=T_{i-1}$. Thus the quotient formula is used only on the generic branch. The results below also include $\phi_i=\pi$ and do not divide by $\sin\phi_i$.

\begin{remark}[Why simple cross multiplication does not remove the generic hypothesis]\label{rem:cross-multiplication}
Multiplying the quotient relation in Theorem~\ref{thm:finite-step} by the two sine factors gives a necessary identity that remains formally meaningful when a torsion angle equals $\pi$.  It is not, however, sufficient across such a step, because the sign relation between the adjacent binormal coefficients is then lost.  For example, take
\[
x_1=x_2=x_3=x_4=1,
\qquad
(\phi_1,\phi_2,\phi_3)=(0.7,\pi,1.2).
\]
The cross-multiplied identities hold for both admissible indices, and the $\phi_2=\pi$ step also has $x_2=x_3$.  Nevertheless the left transition gives
\[
y_2=\frac{1+\cos0.7}{\sin0.7},
\]
whereas the right transition gives
\[
y_3=\frac{1+\cos1.2}{\sin1.2}.
\]
The middle transition would require $y_3=-y_2$, which is impossible.  Thus no conserved vector exists for these data. Since any non-orthogonal helix would give the fixed vector $Q_i=U/c$ through equation~\eqref{eq:Qi}, these data are not helical. Hence this quotient criterion is restricted to the generic branch. Theorem~\ref{thm:phase-parametrization} treats the full non-orthogonal case.
\end{remark}

\begin{remark}[On the torsion-angle hypothesis]\label{rem:torsion-hypothesis}
With the complete branch choice $\phi_i\in(-\pi,\pi]$, the quotient formula requires $\phi_i\notin\{0,\pi\}$.  For a non-orthogonal discrete general helix, $\phi_i=0$ is impossible: the first compatibility equation would give $-x_i=x_{i+1}$, contradicting $x_i,x_{i+1}>0$.  In contrast, $\phi_i=\pi$ is possible.  In that case the undivided compatibility equations reduce to
\[
x_i=x_{i+1},\qquad y_i=-y_{i+1},
\]
so equivalently $\theta_i=\theta_{i+1}$.  Thus $\phi_i=\pi$ is excluded only by the quotient formula, not by the geometry.
\end{remark}

\begin{corollary}[Helical direction and angle reconstruction]\label{cor:axis-reconstruction}
Under the hypotheses of the theorem,
\[
Q=T_i-x_iN_i+y_iB_i
\]
is independent of \(i\), and
\[
\boxed{
U=
\frac{T_i-x_iN_i+y_iB_i}
{\sqrt{1+x_i^2+y_i^2}}
}.
\]
Furthermore, if \(\alpha\) denotes the common tangent-direction angle, then
\[
\boxed{
\cos\alpha=\frac1{\sqrt{1+x_i^2+y_i^2}}.
}
\]
The formula chooses the orientation for which $T_i\cdot U>0$; the corresponding axis direction is the unoriented one-dimensional subspace $\mathbb RQ$.
\end{corollary}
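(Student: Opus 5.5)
The plan is to read everything off the constant vector $Q$ produced in the proof of Theorem~\ref{thm:finite-step}. Under the hypotheses of the theorem, either condition (i) or condition (ii) holds. By the equivalence, we may therefore use the sufficiency direction of that proof. It defines $y_1,\ldots,y_{n-1}$ by the endpoint and interior formulas of Lemma~\ref{lem:compatibility}. It then obtains the compatibility system, and Lemma~\ref{lem:Q-transition} gives $Q_{i+1}=Q_i$. Hence $Q=T_i-x_iN_i+y_iB_i$ is independent of $i$, which is the first assertion. If instead one starts from (i), the derivation of \eqref{eq:Qi} gives $U=cQ_i$ directly. Proposition~\ref{prop:conserved-vector} then shows that the coefficients $y_i$ are unique, since $\phi_i\ne\pi$. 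So both routes produce the same $y_i$.

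Next I will compute the norm. Since $(T_i,N_i,B_i)$ is orthonormal, $\|Q\|^2=1+x_i^2+y_i^2$, and this quantity is therefore independent of $i$. The proof of the theorem already showed $Q\cdot T_j=1$ for all $0\le j\le n-1$, including $T_0$. So with $U=Q/\|Q\|$ we get $T_j\cdot U=1/\sqrt{1+x_i^2+y_i^2}>0$ for every $j$. This gives the boxed formula for $U$. It also gives $\cos\alpha=T_j\cdot U$, which is the second boxed formula, with $\alpha\in[0,\pi/2)$.

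The last point is uniqueness of the direction up to sign, so that the reconstructed $U$ is the helical direction rather than merely one admissible choice. Suppose $U'$ is any unit vector with $T_j\cdot U'=c'\ne0$. Then \eqref{eq:Qi} gives $U'=c'Q'_i$ with $Q'_i=T_i-x_iN_i+y'_iB_i$ constant. By the uniqueness part of Proposition~\ref{prop:conserved-vector} we have $y'_i=y_i$, hence $Q'=Q$ and $U'=\pm U$. Choosing the sign with $T_i\cdot U>0$ gives the stated orientation, and the unoriented axis direction is $\mathbb RQ$.

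There is no serious obstacle here. The only care needed is at the endpoint $T_0$, which is already handled by the backward-tangent identity, and in invoking uniqueness of the $y_i$ so that the angle $\alpha$ is well defined.
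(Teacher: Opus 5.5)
Your proposal is correct and is essentially the paper's argument: both read everything off the constant vector $Q$, using $Q\cdot T_j=1$ (including $T_0$) and $\|Q\|^2=1+x_i^2+y_i^2$. The differences are minor. The paper gets the $i$-independence of $x_i^2+y_i^2$ from the orthogonality of the rotation matrix in the compatibility equations, whereas you get it directly from the constancy of $Q$; your uniqueness-up-to-sign step via Proposition~\ref{prop:conserved-vector}, which makes $\alpha$ well defined, is something the paper leaves implicit.
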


\begin{proof}
The compatibility equations can be written as
\[
\begin{pmatrix}-x_i\\ y_i\end{pmatrix}
=
\begin{pmatrix}
\cos\phi_i&-\sin\phi_i\\
\sin\phi_i&\cos\phi_i
\end{pmatrix}
\begin{pmatrix}x_{i+1}\\ y_{i+1}\end{pmatrix}.
\]
The matrix is orthogonal. Hence
\[
x_i^2+y_i^2=x_{i+1}^2+y_{i+1}^2.
\]
Therefore
\[
\|Q\|^2=1+x_i^2+y_i^2.
\]
Since \(Q\cdot T_i=1\), the stated formulas follow.
\end{proof}

\subsection{Complete non-orthogonal characterizations}

\begin{theorem}[Complete phase characterization]\label{thm:phase-parametrization}
Let $n\ge3$ and let
\[
0<\theta_i<\pi\quad(1\le i\le n-1),\qquad
\phi_i\in(-\pi,\pi]\quad(1\le i\le n-2)
\]
be discrete Frenet data. These data arise from a discrete general helix with non-orthogonal axis if and only if there exist
\[
\alpha\in(0,\pi/2),\qquad
\psi_1,\ldots,\psi_{n-1}\in(-\pi/2,\pi/2)
\]
such that
\begin{align}
\tan\frac{\theta_i}{2}&=\tan\alpha\,\cos\psi_i,
\qquad 1\le i\le n-1,\label{eq:phase-theta}\\
\phi_i&\equiv \pi-\psi_i-\psi_{i+1}\pmod{2\pi},
\qquad 1\le i\le n-2,\label{eq:phase-phi}
\end{align}
where $\phi_i$ is represented in $(-\pi,\pi]$. For each chosen oriented helical direction $U$ with $T_i\cdot U>0$, the corresponding parameters $\alpha$ and $\psi_i$ are uniquely determined by that direction. If at least one torsion angle is different from $\pi$, the Frenet data determine the non-orthogonal helical direction, and hence the phase parameters, uniquely. On the fully alternating locus $\phi_i=\pi$ for every $i$, helical data necessarily have a constant turning angle; for such data the same Frenet data admit a one-parameter family of helical directions (see Remark~\ref{rem:branch-overlap}).
\end{theorem}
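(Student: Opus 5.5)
Write $(x_i,y_i)$ in polar form and use the rotation structure of the compatibility equations in Proposition~\ref{prop:conserved-vector}: the pair $(-x_i,y_i)$ is the rotation of $(x_{i+1},y_{i+1})$ through $\phi_i$ (as in the proof of Corollary~\ref{cor:axis-reconstruction}). Since $x_i>0$, the vector $(x_i,y_i)$ lies in the open right half-plane. So we can write $x_i=r\cos\psi_i$ and $y_i=r\sin\psi_i$ with $\psi_i\in(-\pi/2,\pi/2)$, where $r=\sqrt{x_i^2+y_i^2}>0$ is independent of $i$ by orthogonality. Setting $r=\tan\alpha$ with $\alpha\in(0,\pi/2)$ gives \eqref{eq:phase-theta}. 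Corollary~\ref{cor:axis-reconstruction} (whose proof does not divide by $\sin\phi_i$) then identifies $\alpha$ as the helix angle, since $\cos\alpha=1/\sqrt{1+r^2}$.

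\emph{Necessity.} Given a non-orthogonal helix, orient $U$ so that $c=T_i\cdot U>0$. Proposition~\ref{prop:conserved-vector} supplies $y_i$ satisfying the undivided equations \eqref{eq:Q-compat1}--\eqref{eq:Q-compat2}. In complex notation these read $-x_i+\mathrm{i}y_i=e^{\mathrm{i}\phi_i}(x_{i+1}+\mathrm{i}y_{i+1})$. Also $-x_i+\mathrm{i}y_i=-\overline{(x_i+\mathrm{i}y_i)}=re^{\mathrm{i}(\pi-\psi_i)}$. Equating arguments, and using $r\neq0$, gives $\pi-\psi_i\equiv\phi_i+\psi_{i+1}\pmod{2\pi}$, which is \eqref{eq:phase-phi}. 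Uniqueness for a fixed oriented $U$ holds because $U=cQ$ fixes $Q$, hence all $y_i=Q\cdot B_i$. Then $r$ and the $\psi_i$ are determined, since the argument is unique in $(-\pi/2,\pi/2)$.

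\emph{Sufficiency.} Given $\alpha$ and $\psi_i$ satisfying \eqref{eq:phase-theta}--\eqref{eq:phase-phi}, define $y_i=\tan\alpha\sin\psi_i$. Reversing the complex computation shows that the undivided compatibility equations hold for every $i$, including the steps with $\phi_i=\pi$. Proposition~\ref{prop:conserved-vector}, (b)$\Rightarrow$(a), then yields a non-orthogonal helix. The Frenet data determine the polygon up to motion (Section~2), so these data do arise from a helix.

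\emph{Uniqueness claims.} Suppose some $\phi_j\neq\pi$. Proposition~\ref{prop:conserved-vector} gives unique $y_i$, hence a unique $Q$ and a unique oriented direction with $T_i\cdot U>0$ (the opposite orientation is just $-U$). Thus $\alpha$ and the $\psi_i$ are unique. On the fully alternating locus, \eqref{eq:phase-phi} with $\phi_i=\pi$ forces $\psi_{i+1}=-\psi_i$. Then \eqref{eq:phase-theta} gives constant $\theta_i$ (equivalently, $x_{i+1}=x_i$ from the undivided equations). Here $y_1$ is free by Proposition~\ref{prop:conserved-vector}, so $Q=T_1-x_1N_1+y_1B_1$ varies with $y_1$ and gives a one-parameter family of distinct directions. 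Correspondingly $\psi_1$ ranges over all angles with $\tan\alpha\cos\psi_1=x_1$; each such choice also changes $\alpha$.

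The main obstacle is the phase bookkeeping modulo $2\pi$ at $\phi_i=\pi$, together with confirming that $r>0$ so that the arguments are defined. Positivity of $x_i$ settles the latter. For the former, I would work only with the undivided complex identity and never with quotients.
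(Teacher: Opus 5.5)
Your proposal is correct and follows essentially the same route as the paper. Both arguments encode the undivided compatibility equations as the complex identity $-\overline{w_i}=e^{\mathrm{i}\phi_i}w_{i+1}$ (equivalently $w_{i+1}=-e^{-\mathrm{i}\phi_i}\overline{w_i}$), compare moduli and arguments, obtain uniqueness by propagating the $y$-coefficients from a step with $\sin\phi_j\ne0$, and leave $y_1$ free on the fully alternating locus. The differences are only cosmetic: you get constancy of $r$ from orthogonality of the rotation rather than from $\|Q\|=\sec\alpha$. One small slip in your closing aside: not every choice of $\psi_1$ changes $\alpha$, since $\pm\psi_1$ give the same $\alpha$ but different directions. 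This does not affect the proof.
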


\begin{proof}
Suppose first that the polygon is a non-orthogonal discrete general helix. Reverse the helical direction if necessary so that
\[
T_i\cdot U=\cos\alpha>0,
\qquad \alpha\in(0,\pi/2).
\]
Equation~\eqref{eq:Qi}, whose derivation uses only the constant-angle condition and $c\ne0$, gives
\[
U=(\cos\alpha)Q,
\qquad
Q=T_i-x_iN_i+y_iB_i.
\]
Since $U$ is unit, $\|Q\|=\sec\alpha$, and therefore $x_i^2+y_i^2=\tan^2\alpha$. Set $r=\tan\alpha$ and $w_i=x_i+\mathrm{i}y_i$. Since $x_i>0$, there is a unique $\psi_i\in(-\pi/2,\pi/2)$ such that
\[
w_i=r e^{\mathrm{i}\psi_i}.
\]
The compatibility equations, which follow directly from $Q_{i+1}=Q_i$ and remain valid when $\phi_i=\pi$, are equivalent to
\[
w_{i+1}=-e^{-\mathrm{i}\phi_i}\overline{w_i}.
\]
Comparing moduli and arguments gives \eqref{eq:phase-theta} and \eqref{eq:phase-phi}.

Conversely, choose $\alpha$ and the $\psi_i$ as in the statement and put
\[
r=\tan\alpha,
\qquad x_i=r\cos\psi_i,
\qquad y_i=r\sin\psi_i.
\]
Because $\cos\psi_i>0$, each $x_i>0$ and hence $0<\theta_i<\pi$. Equation \eqref{eq:phase-phi} is equivalent to
\[
w_{i+1}=-e^{-\mathrm{i}\phi_i}\overline{w_i},
\]
whose real and imaginary parts are
\[
-x_i=x_{i+1}\cos\phi_i-y_{i+1}\sin\phi_i,
\qquad
y_i=x_{i+1}\sin\phi_i+y_{i+1}\cos\phi_i.
\]
Lemma~\ref{lem:Q-transition} therefore gives $Q_{i+1}=Q_i$ for
\[
Q_i=T_i-x_iN_i+y_iB_i,
\]
without division by $\sin\phi_i$. As before, $Q\cdot T_i=1$ for all edge tangents, including $T_0$, and
\[
\|Q\|^2=1+r^2=\sec^2\alpha.
\]
Thus $U=Q/\|Q\|$ satisfies $T_i\cdot U=\cos\alpha$. For each choice of positive edge lengths, the discrete fundamental theorem realizes the prescribed Frenet data by a polygon unique up to an orientation-preserving Euclidean motion.

For a fixed oriented helical direction, the coefficients $y_i$ are fixed by $U=(\cos\alpha)Q$, so $r=\tan\alpha=\sqrt{x_i^2+y_i^2}$ and, since $x_i>0$, each $\psi_i=\arg(x_i+\mathrm{i}y_i)$ has a unique representative in $(-\pi/2,\pi/2)$. If some $\phi_j\ne\pi$, then $\sin\phi_j\ne0$ because $\phi_j=0$ is impossible in the present non-orthogonal setting; the compatibility equations at that step determine the adjacent $y$-coefficients, and the remaining equations propagate them uniquely through the chain. Hence the helical direction is unique. If instead $\phi_i=\pi$ for every $i$, then $x_{i+1}=x_i$ and $y_{i+1}=-y_i$ for all $i$, so the tangents alternate between two directions. The value of $y_1$ is free, producing the one-parameter family described explicitly in Example~\ref{ex:nonunique-axis}.
\end{proof}

The phase equations can also be written as the following linear condition.

\begin{theorem}[Linear-subspace characterization]\label{thm:linear-subspace}
Let the Frenet data satisfy the hypotheses of Theorem~\ref{thm:phase-parametrization}, and set
\[
x_i=\tan\frac{\theta_i}{2},\qquad x=(x_1,\ldots,x_{n-1})^{\mathsf T}.
\]
Define torsion-dependent offsets by
\[
\beta_1=0,\qquad \beta_{i+1}=\pi-\phi_i-\beta_i,
\quad 1\le i\le n-2,
\]
and vectors $a,b\in\mathbb R^{n-1}$ by
\[
a_i=\cos\beta_i,
\qquad
b_i=-(-1)^{i-1}\sin\beta_i.
\]
Then the data arise from a discrete general helix with a non-orthogonal axis if and only if
\[
\boxed{\ x\in\operatorname{span}\{a,b\}.\ }
\]
Equivalently,
\[
\operatorname{rank}[a\ b\ x]=\operatorname{rank}[a\ b].
\]
If $\operatorname{rank}[a\ b]=2$, the coefficients $u,v$ in $x=ua+vb$ are unique, $u=x_1>0$, and
\[
\tan\alpha=\sqrt{u^2+v^2},
\qquad
\psi_1=\operatorname{atan2}(v,u).
\]
If $\operatorname{rank}[a\ b]=1$ and the membership condition holds, then necessarily $\phi_i=\pi$ for every $i$ and $x_i$ is constant; this is the fully alternating nonuniqueness case.
\end{theorem}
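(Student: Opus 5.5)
The plan is to reduce the statement to Theorem~\ref{thm:phase-parametrization} by solving the phase recursion \eqref{eq:phase-phi} in closed form. Set $\varepsilon_i=(-1)^{i-1}$. I claim that a sequence $\psi_1,\ldots,\psi_{n-1}$ satisfies $\psi_{i+1}\equiv\pi-\phi_i-\psi_i\pmod{2\pi}$ for all $1\le i\le n-2$ if and only if
\[
\psi_i\equiv\beta_i+\varepsilon_i\psi_1\pmod{2\pi},\qquad 1\le i\le n-1 .
\]
The proof is by induction on $i$. At $i=1$ the claim holds because $\beta_1=0$. For the inductive step, assume $\psi_i\equiv\beta_i+\varepsilon_i\psi_1$. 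Then
\[
\pi-\phi_i-\psi_i\equiv\pi-\phi_i-\beta_i-\varepsilon_i\psi_1=\beta_{i+1}+\varepsilon_{i+1}\psi_1 .
\]
The recursion only determines each $\psi_{i+1}$ modulo $2\pi$ from $\psi_i$, so this gives the equivalence. Expanding the cosine then gives
\[
\tan\alpha\,\cos\psi_i=\tan\alpha\,\bigl(\cos\beta_i\cos\psi_1-\varepsilon_i\sin\beta_i\sin\psi_1\bigr)=u\,a_i+v\,b_i ,
\]
where $u=\tan\alpha\cos\psi_1$ and $v=\tan\alpha\sin\psi_1$.

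\emph{Necessity.} If the data come from a non-orthogonal helix, Theorem~\ref{thm:phase-parametrization} supplies $\alpha$ and $\psi_i$ satisfying \eqref{eq:phase-theta} and \eqref{eq:phase-phi}. The identity above then shows $x=ua+vb$, so $x\in\operatorname{span}\{a,b\}$.

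\emph{Sufficiency.} Suppose $x=ua+vb$. Since $a_1=1$ and $b_1=0$, we get $u=x_1>0$. Put $r=\sqrt{u^2+v^2}>0$, $\alpha=\arctan r\in(0,\pi/2)$, and $\psi_1=\operatorname{atan2}(v,u)$. Because $u>0$, this gives $\psi_1\in(-\pi/2,\pi/2)$. For each $i$, let $\psi_i$ be the representative of $\beta_i+\varepsilon_i\psi_1$ in $(-\pi,\pi]$. The same expansion gives $x_i=r\cos\psi_i$. Since $x_i>0$, we have $\cos\psi_i>0$, so in fact $\psi_i\in(-\pi/2,\pi/2)$. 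The recursion holds modulo $2\pi$ by construction. Therefore \eqref{eq:phase-theta} and \eqref{eq:phase-phi} are satisfied, and Theorem~\ref{thm:phase-parametrization} shows the data are helical.

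\emph{Equivalent rank form and rank two.} The condition $\operatorname{rank}[a\ b\ x]=\operatorname{rank}[a\ b]$ is just a restatement of $x\in\operatorname{span}\{a,b\}$. If $\operatorname{rank}[a\ b]=2$, the coefficients $u,v$ are unique. The formulas $u=x_1$, $\tan\alpha=\sqrt{u^2+v^2}$ and $\psi_1=\operatorname{atan2}(v,u)$ are read off from the construction above. They agree with the phase parameters of Theorem~\ref{thm:phase-parametrization}, because $u=\tan\alpha\cos\psi_1$ and $v=\tan\alpha\sin\psi_1$.

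\emph{Rank one.} Suppose $\operatorname{rank}[a\ b]=1$. Then $a\neq0$ since $a_1=1$, so $b=\lambda a$. Comparing first entries gives $\lambda=b_1=0$, hence $b=0$. Thus $\sin\beta_i=0$ for all $i$, so every $\beta_i\in\{0,\pi\}$ modulo $2\pi$ and every $a_i=\pm1$. Membership now reads $x=ua$. Since all $x_i>0$, every $a_i$ must equal $1$, so every $\beta_i\equiv0$. The recursion then forces $\phi_i\equiv\pi$, that is, $\phi_i=\pi$ in $(-\pi,\pi]$. It also gives $x_i=u$ for every $i$, so $x_i$ is constant.

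The only step needing care is the branch bookkeeping for the phases. The offsets $\beta_i$ are defined without reduction modulo $2\pi$, so the representatives $\psi_i$ must be shown to land in $(-\pi/2,\pi/2)$. This is handled by the positivity $x_i>0$, as in the sufficiency argument.
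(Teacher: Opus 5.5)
Your proposal is correct and follows the paper's proof essentially step for step. Both arguments solve the phase recursion as $\psi_i\equiv(-1)^{i-1}\psi_1+\beta_i \pmod{2\pi}$, expand the cosine to get $x=ua+vb$, and use $x_i>0$ to place the representatives $\psi_i$ in $(-\pi/2,\pi/2)$ before invoking Theorem~\ref{thm:phase-parametrization}; in the rank-one case both use $(a_1,b_1)=(1,0)$ to force $b=0$ and $a_i=1$, hence $\phi_i=\pi$. Your explicit appeal to the uniqueness of $(u,v)$, which identifies $\tan\alpha$ and $\psi_1$ in the rank-two case, is a small clarification that the paper leaves implicit.
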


\begin{proof}
Assume first that the data are helical and use the phase variables of Theorem~\ref{thm:phase-parametrization}. The recursion for $\beta_i$ and the congruence \eqref{eq:phase-phi} imply inductively
\[
\psi_i\equiv(-1)^{i-1}\psi_1+\beta_i\pmod{2\pi}.
\]
With $r=\tan\alpha$, put
\[
u=r\cos\psi_1,\qquad v=r\sin\psi_1.
\]
Then
\[
\begin{aligned}
x_i
&=r\cos\bigl((-1)^{i-1}\psi_1+\beta_i\bigr)\\
&=u\cos\beta_i-v(-1)^{i-1}\sin\beta_i
=ua_i+vb_i,
\end{aligned}
\]
so $x\in\operatorname{span}\{a,b\}$. Since $\beta_1=0$, one has $a_1=1$, $b_1=0$, and therefore $u=x_1>0$.

Conversely, suppose $x=ua+vb$. Again $u=x_1>0$. Set
\[
r=\sqrt{u^2+v^2},\qquad \psi_1=\operatorname{atan2}(v,u)\in(-\pi/2,\pi/2),
\]
and let $\psi_i$ be the unique representative in $(-\pi/2,\pi/2)$ of
$(-1)^{i-1}\psi_1+\beta_i$ modulo $2\pi$. Such a representative exists because
\[
x_i=r\cos\bigl((-1)^{i-1}\psi_1+\beta_i\bigr)>0.
\]
The defining recursion gives
\[
\beta_i+\beta_{i+1}=\pi-\phi_i,
\]
so the alternating $\psi_1$ terms cancel and
\[
\phi_i\equiv\pi-\psi_i-\psi_{i+1}\pmod{2\pi}.
\]
Also $x_i=r\cos\psi_i$. Theorem~\ref{thm:phase-parametrization} therefore gives a non-orthogonal discrete general helix with $\alpha=\arctan r$.

Finally, if $\operatorname{rank}[a\ b]=1$, then $b=0$ because $(a_1,b_1)=(1,0)$. Hence $\sin\beta_i=0$ and therefore $a_i=\cos\beta_i\in\{\pm1\}$ for every $i$. The membership relation becomes $x_i=ua_i$; since $u=x_1>0$ and every $x_i>0$, it follows that $a_i=1$ for all $i$. Hence each $\beta_i$ is an even multiple of $\pi$, and the recursion with $\phi_i\in(-\pi,\pi]$ forces $\phi_i=\pi$ for every $i$. Thus $x_i=u$ is constant. Conversely this fully alternating case has $a=(1,\ldots,1)$ and $b=0$.
\end{proof}

\begin{remark}[A direct computational test]\label{rem:linear-test}
Theorem~\ref{thm:linear-subspace} also gives a simple test on the full non-orthogonal branch, including $\phi_i=\pi$. Compute $\beta$, form $a,b$, and check whether $x$ belongs to their span. For numerical data one may solve $\min_{u,v}\|x-ua-vb\|$. For exact data, the residual is zero exactly in the helical case. The rank equality in the theorem is essential on the degenerate branch: the weaker condition $\operatorname{rank}[a\ b\ x]\le2$ would become vacuous when $b=0$.

The rank drop itself has a simple torsion-angle description. Since $a_1=1$ and $b_1=0$,
\[
\operatorname{rank}[a\ b]=1
\quad\Longleftrightarrow\quad
b=0
\quad\Longleftrightarrow\quad
\phi_i\in\{0,\pi\}\quad\text{for every }i.
\]
Indeed $b=0$ is equivalent to $\sin\beta_i=0$ for every $i$, and the recursion for $\beta_i$ then gives precisely these two torsion values. If the membership condition also holds with $x_i>0$, the proof of Theorem~\ref{thm:linear-subspace} rules out every zero-torsion transition and leaves only the fully alternating case $\phi_i=\pi$ for all $i$. More generally, a single zero-torsion step is automatically rejected, independently of the rank. If $\phi_j=0$, then $\beta_{j+1}=\pi-\beta_j$, and hence
\[
(a_{j+1},b_{j+1})=-(a_j,b_j).
\]
Therefore every representation $x=ua+vb$ would satisfy $x_{j+1}=-x_j$, contradicting $x_j,x_{j+1}>0$. Thus the linear-subspace test excludes zero torsion directly from the positive half-angle data. Consequently, on the non-orthogonal helical locus the condition in Theorem~\ref{thm:phase-parametrization} that at least one torsion angle differs from $\pi$ is equivalent to $\operatorname{rank}[a\ b]=2$ in Theorem~\ref{thm:linear-subspace}; the apparent difference between the two uniqueness hypotheses is therefore only notational on that locus.
\end{remark}

\begin{figure}[!ht]
\centering
\resizebox{0.97\textwidth}{!}{\input{dgh-fig04.pgf}}
\caption{The phase characterization of Theorem~\ref{thm:phase-parametrization}. Left: the coefficients $w_i=x_i+\mathrm{i}y_i$ lie on the circle of radius $\tan\alpha$, and $x_i>0$ restricts the phases $\psi_i$ to $(-\pi/2,\pi/2)$; the four phases shown are those of Example~\ref{ex:nonconstant-helix}, where $\psi_3=\psi_1$ forces $\phi_2=\phi_1$. Right: the turning angle as a function of the phase, $\theta_i=2\arctan(\tan\alpha\cos\psi_i)$, for three values of $\alpha$; the dotted lines $\theta=2\alpha$ display the sharp bound of Corollary~\ref{cor:turning-bound}.}
\label{fig:phase-parametrisation}
\end{figure}

\begin{corollary}[Turning-angle range and sharpness]\label{cor:turning-bound}
For a non-orthogonal discrete general helix with acute helix angle $\alpha$,
\[
\boxed{\theta_i\le 2\alpha\qquad(1\le i\le n-1).}
\]
Equality holds exactly when $y_i=0$, equivalently $\psi_i=0$.
Conversely, given arbitrary turning angles $\theta_1,\ldots,\theta_{n-1}\in(0,\pi)$, every
\[
\alpha\in\left[\frac12\max_i\theta_i,\frac\pi2\right)
\]
occurs as the acute helix angle of some non-orthogonal discrete general helix with those turning angles and suitable torsion angles. Thus the turning-angle sequence alone imposes no further compatibility condition; the right panel of Figure~\ref{fig:phase-parametrisation} displays the bound and its equality case. If the lower endpoint $\alpha=\frac12\max_i\theta_i$ is chosen and all turning angles are equal, then all $\psi_i=0$, hence all $\phi_i=\pi$; this is exactly the alternating planar overlap described in Remark~\ref{rem:branch-overlap}.
\end{corollary}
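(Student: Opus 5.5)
The plan is to read everything off the phase parametrization of Theorem~\ref{thm:phase-parametrization}. For the bound, take a non-orthogonal helix and orient $U$ so that $T_i\cdot U=\cos\alpha>0$. That theorem then gives $\tan(\theta_i/2)=\tan\alpha\cos\psi_i$ with $\psi_i\in(-\pi/2,\pi/2)$. Since $0<\cos\psi_i\le1$, we get $\tan(\theta_i/2)\le\tan\alpha$. Both $\theta_i/2$ and $\alpha$ lie in $(0,\pi/2)$, where $\tan$ is strictly increasing, so $\theta_i\le2\alpha$. Equality holds if and only if $\cos\psi_i=1$, i.e.\ $\psi_i=0$. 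Since $x_i+\mathrm{i}y_i=\tan\alpha\,e^{\mathrm{i}\psi_i}$ with $\tan\alpha>0$, this is the same as $y_i=0$.

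For the converse, fix turning angles $\theta_i\in(0,\pi)$ and an $\alpha\in[\tfrac12\max\theta_i,\pi/2)$. I would construct the phases directly. Set $c_i=\tan(\theta_i/2)/\tan\alpha$; by the choice of $\alpha$ and the monotonicity of $\tan$, each $c_i\in(0,1]$. Put $\psi_i=\arccos c_i\in[0,\pi/2)$. Any sign choice for the $\psi_i$ would also work. Then define
\[
\phi_i\equiv\pi-\psi_i-\psi_{i+1}\pmod{2\pi},
\]
represented in $(-\pi,\pi]$. These data satisfy \eqref{eq:phase-theta} and \eqref{eq:phase-phi}, so the converse direction of Theorem~\ref{thm:phase-parametrization} gives a non-orthogonal discrete general helix. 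It has the prescribed turning angles and helix angle $\alpha$, and it is realized by a polygon for any choice of positive edge lengths. One point needs care: $\alpha$ must be the acute angle between $T_i$ and the \emph{same} oriented direction $U=Q/\|Q\|$. This holds because $\|Q\|^2=1+\tan^2\alpha$, which gives $T_i\cdot U=\cos\alpha$.

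For the endpoint statement, suppose $\alpha=\tfrac12\max\theta_i$ and all $\theta_i$ are equal. Then every $c_i=1$, so $\psi_i=0$ is forced for every $i$ by the first part (equality case). Then \eqref{eq:phase-phi} gives $\phi_i\equiv\pi$, and since $\phi_i$ is represented in $(-\pi,\pi]$, this means $\phi_i=\pi$ for all $i$, which is the alternating case of Remark~\ref{rem:branch-overlap}.

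I expect no real obstacle. The only delicate points are bookkeeping ones. The first is the strict monotonicity of $\tan$ on $(0,\pi/2)$, which requires $\theta_i/2<\pi/2$ and $\alpha<\pi/2$. The second is the mod-$2\pi$ representative for $\phi_i$. Here $\psi_i+\psi_{i+1}\in[0,\pi)$, so $\pi-\psi_i-\psi_{i+1}\in(0,\pi]$ is already the correct representative and no reduction is needed.
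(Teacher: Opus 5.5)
Your proposal is correct and follows the paper's proof step for step. The bound and its equality case come from $\tan(\theta_i/2)=\tan\alpha\cos\psi_i$ together with the monotonicity of $\tan$ on $(0,\pi/2)$, and the converse sets $\psi_i=\arccos\bigl(\tan(\theta_i/2)/\tan\alpha\bigr)$ (the paper allows either sign), defines $\phi_i$ by \eqref{eq:phase-phi}, and invokes Theorem~\ref{thm:phase-parametrization}; your explicit checks that $\pi-\psi_i-\psi_{i+1}\in(0,\pi]$ is already the right representative, and that the endpoint case forces every $\psi_i=0$ and hence every $\phi_i=\pi$, fill in details the paper leaves implicit.
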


\begin{proof}
The necessity follows from \eqref{eq:phase-theta}:
\[
\tan\frac{\theta_i}{2}=\tan\alpha\cos\psi_i\le\tan\alpha.
\]
Monotonicity of $\tan$ on $(0,\pi/2)$ gives $\theta_i/2\le\alpha$, with equality exactly when $\psi_i=0$, equivalently $y_i=0$.

Conversely, fix $\alpha\ge\frac12\max_i\theta_i$ with $\alpha<\pi/2$. Then
\[
0<\frac{\tan(\theta_i/2)}{\tan\alpha}\le1.
\]
Choose independently
\[
\psi_i=\pm\arccos\!\left(\frac{\tan(\theta_i/2)}{\tan\alpha}\right)
\in(-\pi/2,\pi/2),
\]
where the two signs coincide when the arccosine is zero, and define $\phi_i$ by \eqref{eq:phase-phi} with its representative in $(-\pi,\pi]$. Theorem~\ref{thm:phase-parametrization} then produces the required non-orthogonal discrete general helix. Notice that $\phi_i=0$ cannot occur because $\psi_i+\psi_{i+1}\in(-\pi,\pi)$.
\end{proof}

\begin{proposition}[Realizability of prescribed torsion-angle data]\label{prop:torsion-realizability}
Fix torsion angles
\[
\phi_i\in(-\pi,\pi],\qquad 1\le i\le n-2,
\]
and define
\[
\beta_1=0,\qquad \beta_{i+1}=\pi-\phi_i-\beta_i.
\]
There exists a non-orthogonal discrete general helix having these torsion angles and some turning angles in $(0,\pi)$ if and only if
\[
\bigcap_{i=1}^{n-1}
\left\{
\psi\in\mathbb R/2\pi\mathbb Z:
\cos\bigl((-1)^{i-1}\psi+\beta_i\bigr)>0
\right\}
\ne\varnothing.
\]
Thus arbitrary turning-angle data can be realized after a suitable choice of torsion angles, as in Corollary~\ref{cor:turning-bound}, but prescribed torsion-angle data need not be realizable.
\end{proposition}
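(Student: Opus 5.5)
The plan is to reduce the statement to the phase characterization, Theorem~\ref{thm:phase-parametrization}, using the explicit solution of the phase recursion already obtained in the proof of Theorem~\ref{thm:linear-subspace}. The torsion angles are fixed; the turning angles are unknowns to be chosen in $(0,\pi)$.

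For necessity, suppose a non-orthogonal helix has the prescribed $\phi_i$. Theorem~\ref{thm:phase-parametrization} supplies $\alpha\in(0,\pi/2)$ and phases $\psi_i\in(-\pi/2,\pi/2)$ satisfying \eqref{eq:phase-theta} and \eqref{eq:phase-phi}. As in the proof of Theorem~\ref{thm:linear-subspace}, induction on the recursion $\beta_{i+1}=\pi-\phi_i-\beta_i$ together with \eqref{eq:phase-phi} gives
\[
\psi_i\equiv(-1)^{i-1}\psi_1+\beta_i\pmod{2\pi}.
\]
For the inductive step, $\psi_{i+1}\equiv\pi-\phi_i-\psi_i\equiv\pi-\phi_i-\beta_i-(-1)^{i-1}\psi_1=\beta_{i+1}+(-1)^{i}\psi_1$. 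Since $\cos\psi_i>0$, the class $\psi=\psi_1$ lies in every set of the intersection, so the intersection is nonempty.

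For sufficiency, pick $\psi$ in the intersection. For each $i$, let $\psi_i$ be the unique representative in $(-\pi/2,\pi/2)$ of $(-1)^{i-1}\psi+\beta_i$; it exists because the cosine of this angle is positive. Then $\psi_1\equiv\psi$, since $\beta_1=0$. The same telescoping identity $\beta_i+\beta_{i+1}=\pi-\phi_i$ cancels the alternating terms and yields $\phi_i\equiv\pi-\psi_i-\psi_{i+1}\pmod{2\pi}$, which is \eqref{eq:phase-phi}. Now choose any $\alpha\in(0,\pi/2)$, for instance $\alpha=\pi/4$, and define $\theta_i=2\arctan(\tan\alpha\cos\psi_i)$. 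This lies in $(0,\pi)$ because $\tan\alpha\cos\psi_i>0$. Theorem~\ref{thm:phase-parametrization}, together with the discrete fundamental theorem for any positive edge lengths, then realizes a non-orthogonal helix with exactly the prescribed $\phi_i$.

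The main point needing care is the bookkeeping of congruences modulo $2\pi$ versus representatives in $(-\pi/2,\pi/2)$, and checking that $\phi_i$ is reproduced exactly as its representative in $(-\pi,\pi]$. This is harmless, since $\phi_i$ is fixed in advance and \eqref{eq:phase-phi} is a congruence. For the closing remark, I would cite Corollary~\ref{cor:turning-bound} for the turning-angle realizability. For non-realizable torsion data, I would give an example with an empty intersection: a single $\phi_j=0$ suffices. In that case $\beta_{j+1}=\pi-\beta_j$, so $(-1)^{j}\psi+\beta_{j+1}=\pi-\bigl((-1)^{j-1}\psi+\beta_j\bigr)$, and the two cosines are negatives of each other. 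They cannot both be positive, so the intersection is empty. This matches Remark~\ref{rem:linear-test}.
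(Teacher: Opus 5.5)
Your proof is correct and follows essentially the same route as the paper. Necessity comes from the closed form $\psi_i\equiv(-1)^{i-1}\psi_1+\beta_i \pmod{2\pi}$ together with $\cos\psi_i>0$. Sufficiency picks the representatives $\psi_i\in(-\pi/2,\pi/2)$, fixes an arbitrary $r=\tan\alpha>0$, sets $\theta_i=2\arctan(r\cos\psi_i)$, and then invokes Theorem~\ref{thm:phase-parametrization}.

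Your non-realizability example with a single $\phi_j=0$ is valid. The paper's Example~\ref{ex:torsion-nonrealizable} makes a stronger point: the obstruction already occurs on the generic branch $\phi_i\notin\{0,\pi\}$.
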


\begin{proof}
By Theorem~\ref{thm:phase-parametrization} and the recursion for $\beta_i$, every non-orthogonal helical phase sequence has the form
\[
\psi_i\equiv(-1)^{i-1}\psi_1+\beta_i\pmod{2\pi}.
\]
The condition $\psi_i\in(-\pi/2,\pi/2)$ is equivalent to $\cos\psi_i>0$, so necessity follows.

Conversely, choose $\psi_1$ in the displayed intersection and any $r>0$. Put
\[
\psi_i\equiv(-1)^{i-1}\psi_1+\beta_i\pmod{2\pi},
\qquad
x_i=r\cos\psi_i>0,
\]
with representatives $\psi_i\in(-\pi/2,\pi/2)$, and define
\[
\theta_i=2\arctan x_i.
\]
The recursion for $\beta_i$ gives
\[
\phi_i\equiv\pi-\psi_i-\psi_{i+1}\pmod{2\pi}.
\]
Theorem~\ref{thm:phase-parametrization} then yields the required non-orthogonal discrete general helix.
\end{proof}

\begin{example}[A torsion-angle sequence that is not realizable]\label{ex:torsion-nonrealizable}
Take $n=4$ and
\[
(\phi_1,\phi_2)=(0.1,-0.1).
\]
Then
\[
\beta=(0,\pi-0.1,0.2).
\]
The first and second positivity conditions force, modulo $2\pi$,
\[
\psi_1\in\left(\frac\pi2-0.1,\frac\pi2\right),
\]
whereas the third requires
\[
\psi_1<\frac\pi2-0.2.
\]
These conditions are incompatible. Hence no non-orthogonal discrete general helix can have this torsion-angle sequence, irrespective of the turning angles.
\end{example}

\begin{remark}[Generic codimension]\label{rem:codimension}
The linear criterion also gives a short rigorous derivation of the generic codimension. On the branch $\phi_i\notin\{0,\pi\}$, one already has $\sin\beta_2=\sin\phi_1\ne0$. Since $(a_1,b_1)=(1,0)$, this implies that the two columns $a$ and $b$ of Theorem~\ref{thm:linear-subspace} are linearly independent.

Consider the open parameter set
\[
\begin{aligned}
\mathcal D=\bigl\{(\phi,u,v)\in{}&
\bigl(( -\pi,0)\cup(0,\pi)\bigr)^{n-2}\times\mathbb R^2:\\
&u>0,\quad x_i=ua_i(\phi)+vb_i(\phi)>0
\ \text{for all }i\bigr\}.
\end{aligned}
\]
and the map
\[
\Xi:\mathcal D\longrightarrow\mathbb R^{2n-3},\qquad
(\phi,u,v)\longmapsto
\bigl(2\arctan x_1,\ldots,2\arctan x_{n-1};\phi_1,\ldots,\phi_{n-2}\bigr).
\]
The map is smooth. If a tangent vector belongs to the kernel of $d\Xi$, its torsion components vanish because the last $n-2$ output coordinates are the identity. With $\dot\phi=0$, the turning-angle components give
\[
0=\frac{2}{1+x_i^2}(\dot u\,a_i+\dot v\,b_i),
\qquad 1\le i\le n-1.
\]
Linear independence of $a,b$ yields $\dot u=\dot v=0$, so $d\Xi$ has rank $n$. Moreover $\Xi$ is injective on $\mathcal D$: the torsion coordinates recover $\phi$, while $u=x_1$ and, since $b_2\ne0$, $v=(x_2-u a_2)/b_2$. The inverse on the image is therefore smooth. Hence $\Xi$ is an embedding onto its image, and the generic helical locus is locally an $n$-dimensional embedded submanifold of the $(2n-3)$-dimensional Frenet-angle space. Its codimension is
\[
(2n-3)-n=n-3,
\]
exactly the number of scalar quotient equations in Theorem~\ref{thm:finite-step}. The rank-degenerate helical locus is the fully alternating two-tangent overlap described in Remark~\ref{rem:branch-overlap}.
\end{remark}

\begin{remark}[Backtracking torsion angles]\label{rem:pi-branch}
In the setting of Theorem~\ref{thm:phase-parametrization}, the complete phase characterization includes the case excluded by the quotient formula:
\[
\phi_i=\pi
\quad\Longleftrightarrow\quad
\psi_{i+1}=-\psi_i
\quad\Longleftrightarrow\quad
T_{i+1}=T_{i-1}.
\]
No singular quotient is involved. In particular, a nonplanar discrete general helix may contain such a backtracking step. If every torsion angle equals $\pi$, one reaches the alternating two-tangent locus of Remark~\ref{rem:branch-overlap}, where helical-direction nonuniqueness and phase-map degeneracy occur simultaneously.
\end{remark}

\begin{example}[A nonplanar helix with $\phi_1=\pi$]\label{ex:pi-helix}
Take
\[
\alpha=0.9,
\qquad
(\psi_1,\psi_2,\psi_3,\psi_4)=(0.6,-0.6,0.4,1.0).
\]
Theorem~\ref{thm:phase-parametrization} gives
\[
\phi_1=\pi,
\qquad
\phi_2=-\pi+0.2,
\qquad
\phi_3=\pi-1.4,
\]
and
\[
\theta_i=2\arctan(\tan(0.9)\cos\psi_i)\in(0,\pi).
\]
Thus the first torsion step is the backtracking case $T_2=T_0$, while the subsequent torsion angles are neither $0$ nor $\pi$. Hence the reconstructed tangent sequence is not contained in a plane, yet every tangent satisfies $T_i\cdot U=\cos(0.9)$ for the helical direction produced in the proof. This example shows why the case $\phi_i=\pi$ must be kept; see Figure~\ref{fig:pi-helix}.
\end{example}

\begin{figure}[!ht]
\centering
\resizebox{0.97\textwidth}{!}{\input{dgh-fig06.pgf}}
\caption{The nonplanar general helix of Example~\ref{ex:pi-helix}, reconstructed with unit edge lengths and rotated so that the helical direction is vertical. Left: a perspective view together with its horizontal floor projection, chosen to make the change of azimuth and the nonplanarity visible. Right: the tangent indicatrix on the small circle $X\cdot U=\cos\alpha$ with $\alpha=0.9$; the backtracking step $\phi_1=\pi$ gives $T_2=T_0$. This is a general helix in the constant-tangent-angle sense and need not resemble a circular screw helix.}
\label{fig:pi-helix}
\end{figure}

\begin{example}[Nonuniqueness on the fully alternating locus]\label{ex:nonunique-axis}
Let $n=3$ and prescribe
\[
\theta_1=\theta_2=0.8,\qquad \phi_1=\pi.
\]
The prescribed data have $\theta_1=\theta_2$ and $\phi_1=\pi$; substituting these values into the tangent transition formula~\eqref{eq:T-transition} gives $T_2=T_0$. Hence the tangent set consists of the two distinct vectors $T_0$ and $T_1$. A unit vector $U$ is an admissible helical direction exactly when
\[
(T_1-T_0)\cdot U=0.
\]
Hence the admissible unit helical directions form the great circle $S^2\cap(T_1-T_0)^\perp$.  Except for the two plane normals, these directions have $T_i\cdot U\ne0$, and the corresponding acute helix angle varies continuously. Thus the same Frenet data admit a one-parameter family of non-orthogonal helical directions. This explains the nonuniqueness in Theorem~\ref{thm:phase-parametrization}; see Figure~\ref{fig:axis-nonuniqueness}.
\end{example}

\begin{figure}[!ht]
\centering
\resizebox{0.97\textwidth}{!}{\input{dgh-fig07.pgf}}
\caption{Nonuniqueness of the helical direction in Example~\ref{ex:nonunique-axis}. Left: only the two directions $T_0=T_2$ and $T_1$ occur, and every unit vector on the great circle $S^2\cap(T_1-T_0)^{\perp}$ is an admissible helical direction. Right: the acute angle between the tangents and the directions on that great circle. Over the full admissible great circle it ranges in $[\theta/2,\pi/2]$. The minimum $\theta/2$ is the equality case $\theta_i=2\alpha$ of Corollary~\ref{cor:turning-bound}; the two points with $\alpha=\pi/2$ are the plane-normal directions and belong to the orthogonal branch. Restricting to non-orthogonal helical directions therefore gives $\alpha\in[\theta/2,\pi/2)$.}
\label{fig:axis-nonuniqueness}
\end{figure}

\begin{remark}[Edge-length independence]\label{rem:length-independence}
The helical conditions involve only the unit edge tangents, or equivalently the Frenet-angle data. Therefore positive edge lengths may be changed independently without changing discrete general helicity. In particular, the unit edge lengths used in Example~\ref{ex:nonconstant-helix} are only a convenient choice. This statement concerns the exact polygonal condition. The estimates in Section~5 instead assume uniform arclength sampling $s_i=s_0+ih$ of a fixed smooth curve. Arbitrary independent changes of the edge lengths generally destroy that sampling model, so the stated $O(h^2)$ and $O(h)$ consistency orders are not asserted for such nonuniform resamplings.  Figure~\ref{fig:edge-length} illustrates the exact statement.
\end{remark}

\begin{figure}[!ht]
\centering
\resizebox{0.97\textwidth}{!}{\input{dgh-fig08.pgf}}
\caption{Edge-length independence, Remark~\ref{rem:length-independence}. Left: perspective views of two polygons built from the same tangents of Example~\ref{ex:nonconstant-helix}, one with unit edges and one with edge lengths $(0.45,1.55,0.75,1.35,0.60)$. Right: their horizontal projections. The edge lengths change the realized polygon but not the unit tangent sequence, Frenet-angle data, helical direction $U$, or helix angle.}
\label{fig:edge-length}
\end{figure}

\begin{remark}[No closed non-orthogonal helices]\label{rem:no-closed}
A polygonal discrete general helix with $T_i\cdot U=c\ne0$ and positive edge lengths cannot close. After orienting $U$ so that $c>0$,
\[
\left(\sum_i \ell_iT_i\right)\cdot U=c\sum_i\ell_i>0,
\]
so the total edge displacement cannot vanish. Closed examples are possible only in the orthogonal-axis branch $c=0$.
\end{remark}

\begin{example}[A nonconstant discrete general helix]\label{ex:nonconstant-helix}
Consider
\[
\begin{aligned}
T_0&=\left(\frac45,0,\frac35\right),&
T_1&=\left(\frac{16}{25},\frac{12}{25},\frac35\right),\\
T_2&=\left(\frac{12}{25},\frac{16}{25},\frac35\right),&
T_3&=\left(0,\frac45,\frac35\right),\\
T_4&=\left(-\frac{12}{25},\frac{16}{25},\frac35\right).
\end{aligned}
\]
Every \(T_i\) is a unit vector and
\[
T_i\cdot(0,0,1)=\frac35.
\]
Thus the fixed helical direction is \(U=(0,0,1)\) and
\[
\cos\alpha=\frac35.
\]
The resulting polygon and a representative line parallel to its axis direction are shown in Figure~\ref{fig:nonconstant-helix}.

Taking unit edge lengths and \(P_0=(0,0,0)\) gives
\[
\begin{aligned}
P_1&=\left(\frac45,0,\frac35\right),\\
P_2&=\left(\frac{36}{25},\frac{12}{25},\frac65\right),\\
P_3&=\left(\frac{48}{25},\frac{28}{25},\frac95\right),\\
P_4&=\left(\frac{48}{25},\frac{48}{25},\frac{12}{5}\right),\\
P_5&=\left(\frac{36}{25},\frac{64}{25},3\right).
\end{aligned}
\]
The turning data are nonconstant; for instance,
\[
T_0\cdot T_1=\frac{109}{125},
\qquad
T_1\cdot T_2=\frac{609}{625}.
\]
Hence \(\theta_1\neq\theta_2\).  More precisely,
\[
x_1=\frac{2\sqrt{26}}{39},\qquad
x_2=\frac{2\sqrt{1234}}{617},\qquad
x_3=x_4=\frac{2\sqrt{26}}{39}.
\]
Using the signed-angle convention \eqref{eq:torsion-convention} and evaluating the angle with $\operatorname{atan2}$ gives
\[
\phi_1=\phi_2=\arctan\frac{25}{86},
\qquad
\phi_3=\arctan\frac{5}{12}.
\]
The equality $\phi_1=\phi_2$ can be read directly from Theorem~\ref{thm:phase-parametrization}. Here $\tan\alpha=4/3$ and the corresponding phase variables satisfy $\psi_3=\psi_1$; hence
\[
\phi_2\equiv\pi-\psi_2-\psi_3=\pi-\psi_1-\psi_2\equiv\phi_1\pmod{2\pi}.
\]
Thus the equality is a consequence of the phase symmetry of this example rather than of a constant-torsion assumption.  The torsion data are nevertheless nonconstant because $\phi_3\ne\phi_2$. Numerically,
\[
(\phi_1,\phi_2,\phi_3)
\approx(0.282900854,0.282900854,0.394791120).
\]
The corresponding binormal coefficients are available in closed form:
\[
y_1=y_3=y_4=\frac{10\sqrt{26}}{39},
\qquad
y_2=\sqrt{\frac{9800}{5553}}.
\]
The two available finite-step identities therefore hold exactly. Their common values are approximately
$1.328462198722335$ and $1.307440900921227$, respectively.
Finally, direct substitution of these exact coefficients into the Frenet-frame expression gives, at every available interior frame,
\[
Q_i=T_i-x_iN_i+y_iB_i=(0,0,5/3).
\] Hence
\[
U=(0,0,1),\qquad \cos\alpha=3/5,
\]
exactly as prescribed by the original tangent data.  In addition, all
five points of the tangent indicatrix lie on the small circle
\[
S^2\cap\{z=3/5\}.
\]
Thus this example verifies the fixed-direction condition, the small-circle condition, the conserved vector $Q=(0,0,5/3)$, and the finite-step relation. Both the turning data and the torsion data are nonconstant.
\end{example}

\begin{figure}[!ht]
\centering
\resizebox{0.97\textwidth}{!}{\input{dgh-fig09.pgf}}
\caption{The nonconstant discrete general helix of Example~\ref{ex:nonconstant-helix} with unit edge lengths. Left: a perspective view of the polygon, a representative line parallel to the fixed direction $U=(0,0,1)$, and the conserved vector $Q=(0,0,5/3)$ drawn with a small lateral offset for readability. Right: the horizontal projection, which makes the changing edge azimuths visible. Here $U$ is the helical direction and $\mathbb R U$ is the axis direction; a general helix need not wind around a distinguished affine line as a circular helix does.}
\label{fig:nonconstant-helix}
\end{figure}

\section{Planar branch and smooth consistency}

\begin{proposition}[Orthogonal-axis case]\label{prop:orthogonal-axis}
Independently of whether the Frenet angles of Section~2 are defined at every vertex, a polygon satisfies
\[
T_i\cdot U=0
\]
for a fixed nonzero vector \(U\) and every edge \(i\) if and only if all
of its vertices lie in an affine plane orthogonal to \(U\).
\end{proposition}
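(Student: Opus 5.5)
The argument works directly with the edge vectors and the linear functional $X\mapsto X\cdot U$, without using the Frenet frame. The proposition explicitly does not require the angles $\theta_i,\phi_i$ to be defined, so only the definition $T_i=(P_{i+1}-P_i)/\|P_{i+1}-P_i\|$ and the standing assumption $P_{i+1}\ne P_i$ are needed.

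For the forward direction, assume $T_i\cdot U=0$ for $0\le i\le n-1$. Multiplying by the positive edge length $\|P_{i+1}-P_i\|$ gives $(P_{i+1}-P_i)\cdot U=0$. Telescoping then yields
\[
(P_k-P_0)\cdot U=\sum_{i=0}^{k-1}(P_{i+1}-P_i)\cdot U=0,\qquad 0\le k\le n.
\]
Hence every vertex lies in the affine plane $\{X:(X-P_0)\cdot U=0\}$. Since $U\ne0$, this is a genuine plane with normal $U$.

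For the converse, suppose all vertices lie in an affine plane $\{X: X\cdot U=h\}$ orthogonal to $U$. Then $(P_{i+1}-P_i)\cdot U=h-h=0$, and dividing by the nonzero edge length gives $T_i\cdot U=0$ for every edge.

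No step is a real obstacle here. The only points needing care are the following.
\begin{enumerate}[label=\textup{(\roman*)}]
\item Normalization of $U$ is irrelevant, because the condition is homogeneous in $U$.
\item The positivity of the edge lengths is what allows passing between $T_i$ and $P_{i+1}-P_i$.
\item The statement is meant to hold independently of the Frenet hypotheses. In a remark after the proof one could note that this is consistent with Proposition~\ref{prop:spherical-characterization}: for $c=0$ the tangent indicatrix lies on the great circle $S^2\cap U^\perp$.
\end{enumerate}
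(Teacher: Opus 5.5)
Your proof is correct and takes essentially the same approach as the paper. Both arguments show that each edge vector lies in $U^\perp$, telescope to place every vertex in $P_0+U^\perp$, and obtain the converse by differencing two vertices of the plane and using the positive edge lengths.
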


\begin{proof}
If \(T_i\cdot U=0\), every edge vector \(P_{i+1}-P_i\) lies in \(U^\perp\).
Hence \(P_i-P_0\in U^\perp\) for every \(i\), and all vertices lie in
\(P_0+U^\perp\). Conversely, if all vertices lie in an affine plane with
normal \(U\), every edge tangent is orthogonal to \(U\).
\end{proof}

\begin{example}\label{ex:planar}
Any nondegenerate planar polygon in the \(xy\)-plane satisfies
\[
T_i\cdot(0,0,1)=0.
\]
This is the orthogonal branch. It is outside the generic hypotheses of Theorem~\ref{thm:finite-step}: a convex planar polygon has $\phi_i=0$, while a planar polygon with reversals of the oriented binormal may have $\phi_i\in\{0,\pi\}$; see Figure~\ref{fig:planar-case}.
\end{example}

\begin{figure}[!ht]
\centering
\resizebox{0.97\textwidth}{!}{\input{dgh-fig10.pgf}}
\caption{Planar polygons in the orthogonal-axis branch of Example~\ref{ex:planar}, drawn inside their plane in space together with the orthogonal helical direction $U$. Left: a convex polygon, where the oriented binormal $B_i$ is constant and every $\phi_i=0$. Right: a polygon whose turning direction reverses; at each reversal $B_i$ is replaced by its opposite and $\phi_i=\pi$. Both cases lie outside the generic hypotheses of Theorem~\ref{thm:finite-step}.}
\label{fig:planar-case}
\end{figure}

\begin{corollary}[Non-orthogonal/planar dichotomy]\label{cor:dichotomy}
Every discrete general helix in the sense of Definition~\ref{def:discrete-general-helix} belongs, for any chosen helical direction, to one of two branches. If $T_i\cdot U\ne0$, its Frenet data are described by Theorem~\ref{thm:phase-parametrization} whenever the standing turning-angle assumptions hold. If $T_i\cdot U=0$, the polygon is planar by Proposition~\ref{prop:orthogonal-axis}.
\end{corollary}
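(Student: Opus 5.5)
The statement is a case split on the sign of $c=T_i\cdot U$ for a fixed choice of helical direction $U$. I would therefore take a discrete general helix in the sense of Definition~\ref{def:discrete-general-helix}, fix one admissible pair $(U,c)$ with $\|U\|=1$, and distinguish $c\ne0$ from $c=0$. These two cases are exhaustive. The corollary is stated per chosen direction, so I only have to show that each case lands in the claimed branch. I do not need to show that the branches are disjoint; indeed Example~\ref{ex:nonunique-axis} shows that the same polygon can admit directions of both kinds.

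In the case $c\ne0$, assume the standing turning-angle hypotheses $0<\theta_i<\pi$, so that the Frenet frame and the torsion angles $\phi_i\in(-\pi,\pi]$ are defined. The polygon is then a discrete general helix with a non-orthogonal axis. The necessity direction of Theorem~\ref{thm:phase-parametrization} applies directly, after replacing $U$ by $-U$ if $c<0$ so that $T_i\cdot U=\cos\alpha>0$ with $\alpha\in(0,\pi/2)$. That theorem then produces the phases $\psi_i\in(-\pi/2,\pi/2)$ satisfying \eqref{eq:phase-theta} and \eqref{eq:phase-phi}. Its proof uses only the derivation of \eqref{eq:Qi} and Lemma~\ref{lem:Q-transition}, and neither divides by $\sin\phi_i$. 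This is exactly the sense in which the Frenet data ``are described by'' that theorem. Reversing $U$ does not change the axis direction $\mathbb RU$, so no generality is lost.

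In the case $c=0$, I invoke Proposition~\ref{prop:orthogonal-axis} with this $U$. It needs no Frenet hypotheses: every edge vector lies in $U^\perp$, so all vertices lie in the affine plane $P_0+U^\perp$. Hence the polygon is planar.

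The only point needing care is bookkeeping rather than a real obstacle. The first branch requires the standing turning-angle hypotheses, while the planar branch does not, which is why the statement includes the qualifier ``whenever''. The sign normalization of $U$ is the other detail to state explicitly.
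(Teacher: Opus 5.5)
Your proposal is correct and matches the paper's proof: the paper also splits on $c\ne0$ versus $c=0$ and invokes Theorem~\ref{thm:phase-parametrization} and Proposition~\ref{prop:orthogonal-axis}, respectively. Your remarks on the sign normalization of $U$ and on the branches not being mutually exclusive (cf.\ Remark~\ref{rem:branch-overlap}) are accurate elaborations of points the paper leaves implicit.
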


\begin{proof}
This is immediate from the alternatives $T_i\cdot U=c\ne0$ and $c=0$, together with Theorem~\ref{thm:phase-parametrization} and Proposition~\ref{prop:orthogonal-axis}.
\end{proof}

\begin{remark}[The unique overlap of the two axis branches]\label{rem:branch-overlap}
The two branches in Corollary~\ref{cor:dichotomy} are alternatives for a chosen helical direction, not necessarily mutually exclusive properties of the polygon.  A polygon can admit both an orthogonal axis and a non-orthogonal axis precisely when its tangent set consists of two distinct directions. Indeed, if the polygon is planar and also satisfies $T_i\cdot U=c\ne0$, its tangents lie in the intersection of a great circle with a noncentral plane section of $S^2$, which contains at most two points. Since consecutive tangents are distinct, they must alternate. Equivalently,
\[
\phi_i=\pi\quad\text{for every }i,
\]
with a constant turning angle. Conversely, such an alternating two-direction polygon is planar and, as Example~\ref{ex:nonunique-axis} shows, has a one-parameter family of non-orthogonal helical directions as well as the orthogonal plane-normal direction. Thus the overlap locus is exactly the nonuniqueness locus of Theorem~\ref{thm:phase-parametrization}; it is also the rank-degenerate locus excluded from the generic codimension statement in Remark~\ref{rem:codimension}.  The same geometry is visualized in Figure~\ref{fig:axis-nonuniqueness}.
\end{remark}

\begin{proposition}[Second-order smooth consistency]\label{prop:smooth-consistency}
Let \(\gamma\in C^6(I,\mathbb R^3)\) be a unit-speed space curve.
Assume that, on a compact subinterval \(J\Subset I\),
\[
\kappa(s)\ge \kappa_0>0,\qquad |\tau(s)|\ge\tau_0>0.
\]
Sample it uniformly by
\[
P_i=\gamma(s_i),\qquad s_i=s_0+ih.
\]
For the quotient indexed by $i$, call $C_i=[s_{i-1},s_{i+2}]$ its associated four-point cell. All quotient estimates below are uniform whenever $C_i\subset J$; estimates for the defect $D_i^h$, which involves two consecutive quotients, are uniform whenever $C_i\cup C_{i+1}=[s_{i-1},s_{i+3}]\subset J$.
Let \(\theta_i^h\) and \(\phi_i^h\) be the turning and signed torsion
angles of the chord polygon and set
\[
x_i^h=\tan\frac{\theta_i^h}{2}.
\]
There exists $h_0=h_0(\kappa_0,\tau_0,J)>0$ such that, for $0<h<h_0$ and all cells under consideration,
\[
0<\theta_i^h<\pi,\qquad \phi_i^h\notin\{0,\pi\}.
\]
Hence the sampled polygon lies in the generic Frenet branch and all quotients below are well defined.
With $m_i=s_i+h/2$, the midpoint convention compatible with the chord tangents gives the
asymptotic expansions
\[
x_i^h=\frac h2\kappa(s_i)+O(h^3),
\]
and
\[
\phi_i^h=h\tau\left(s_i+\frac h2\right)+O(h^3).
\]
Consequently,
\[
\frac{x_i^h+x_{i+1}^h\cos\phi_i^h}{\sin\phi_i^h}
=
\frac{\kappa}{\tau}\left(s_i+\frac h2\right)+O(h^2).
\]
If
\[
D_i^h:=
\frac{x_{i+1}^h+x_{i+2}^h\cos\phi_{i+1}^h}{\sin\phi_{i+1}^h}
-
\frac{x_i^h+x_{i+1}^h\cos\phi_i^h}{\sin\phi_i^h},
\]
then, uniformly on such cells,
\[
\boxed{
\frac{D_i^h}{h}
=
\left(\frac{\kappa}{\tau}\right)'(m_i)+O(h)
},
\qquad m_i=s_i+\frac h2.
\]
Thus the finite-step conservation law approaches $(\kappa/\tau)'=0$, equivalently \(\tau/\kappa=\mathrm{constant}\) on intervals where \(\kappa\tau\neq0\). The scalar quotient itself is second-order accurate, whereas its normalized finite difference is first-order accurate. If the underlying smooth curve is a general helix, then
\[
D_i^h=O(h^3).
\]
\end{proposition}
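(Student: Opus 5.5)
The plan is a Taylor-expansion argument organised around the two natural centres. The vertex $s_i$ is the centre for $\theta_i^h$, and the edge midpoint $m_i=s_i+h/2$ is the centre for $\phi_i^h$. Reflection symmetry about each centre should kill the even-order error terms, and smoothness of the error coefficients in the centre variable should then handle the finite difference $D_i^h$.

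\textbf{Step 1: turning angles.} I would expand the two chords at $P_i$:
\[
P_{i+1}-P_i=h\gamma'+\tfrac{h^2}{2}\gamma''+\tfrac{h^3}{6}\gamma'''+\cdots,\qquad
P_i-P_{i-1}=h\gamma'-\tfrac{h^2}{2}\gamma''+\tfrac{h^3}{6}\gamma'''-\cdots,
\]
all evaluated at $s_i$. The unit chord tangents $T_{i-1}^h,T_i^h$ are obtained from each other by $h\mapsto -h$. Hence $\cos\theta_i^h=T_{i-1}^h\cdot T_i^h$ and $\|T_{i-1}^h\times T_i^h\|$ are even functions of $h$ with $C^{4}$-controlled coefficients. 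The leading term is $\|T_{i-1}^h\times T_i^h\|=h\kappa(s_i)+O(h^3)$, using $\gamma'\times\gamma''$ and $|\gamma''|=\kappa$. Therefore $\theta_i^h=h\kappa(s_i)+O(h^3)$ and $x_i^h=\tfrac h2\kappa(s_i)+O(h^3)$. Because $\kappa\ge\kappa_0$, this gives $0<\theta_i^h<\pi$ for small $h$, uniformly on $J$.

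\textbf{Step 2: torsion angles.} I would write $B_i^h=B(s_i)+h^2\beta(s_i)+O(h^4)$. This form follows from the same evenness, since $B_i^h$ is the normalised cross product of the two chords and is invariant under $h\mapsto-h$ combined with swapping and negating the chords. By the convention \eqref{eq:torsion-convention}, $\sin\phi_i^h=-B_{i+1}^h\cdot N_i^h$ and $\cos\phi_i^h=B_{i+1}^h\cdot B_i^h$. The frame pair $(B_i^h,B_{i+1}^h)$, reflected about $m_i$ with orientation reversed, is mapped to itself with the order swapped. This makes $\phi_i^h$ an odd function $\Phi(m_i,h)$ of $h$ with smooth dependence on $m_i$. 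Its leading coefficient comes from $B'=-\tau N$, giving $\phi_i^h=h\tau(m_i)+h^3G(m_i)+O(h^4)$. Since $|\tau|\ge\tau_0$ and $h$ is small, $\phi_i^h\notin\{0,\pi\}$.

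I expect this step to be the main obstacle. The odd-symmetry claim needs care with the signed convention, and checking that $C^6$ regularity suffices for an $O(h^3)$ remainder that is uniform and Lipschitz in $m$ is delicate. If the symmetry argument is shaky, I will instead fall back on brute-force expansion of $B_i^h\times B_{i+1}^h$ to order $h^3$ about $m_i$.

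\textbf{Step 3: quotient and defect.} In the numerator, $x_i^h+x_{i+1}^h\cos\phi_i^h=\tfrac h2(\kappa(s_i)+\kappa(s_{i+1}))+O(h^3)=h\kappa(m_i)+O(h^3)$, by the midpoint rule and $\cos\phi_i^h=1+O(h^2)$. The denominator is $\sin\phi_i^h=h\tau(m_i)+O(h^3)$. Dividing, with the denominator bounded below by $h\tau_0/2$, gives the quotient $(\kappa/\tau)(m_i)+O(h^2)$.

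For $D_i^h$ I would sharpen this to quotient $=(\kappa/\tau)(m_i)+h^2H(m_i)+R$. Here $H$ is $C^1$ on $J$, built from derivatives of $\gamma$ up to order $5$ or $6$, and $R=O(h^3)$. The differences are then
\[
D_i^h=(\kappa/\tau)(m_{i+1})-(\kappa/\tau)(m_i)+h^2\bigl(H(m_{i+1})-H(m_i)\bigr)+O(h^3).
\]
This gives $D_i^h/h=(\kappa/\tau)'(m_i)+O(h)$; the crude $O(h^2)$ bound alone would also suffice for this. For a general helix the first difference vanishes by Lancret, and $H(m_{i+1})-H(m_i)=O(h)$, so $D_i^h=O(h^3)$. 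The $C^6$ hypothesis is exactly what should make $H$ differentiable.
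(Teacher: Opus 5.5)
Your proposal is correct and follows the paper's proof step for step. It uses a vertex-centred expansion $x_i^h=\frac{h}{2}\kappa(s_i)+h^3A_x(s_i)+O(h^4)$, a midpoint-centred expansion $\phi_i^h=h\tau(m_i)+h^3A_\phi(m_i)+O(h^4)$, and the refined quotient $\rho(m_i)+h^2S(m_i)+O(h^3)$ with a $C^1$ coefficient supplied by the $C^6$ hypothesis, followed by differencing. The only difference is that you remove the even-order terms by parity in $h$, whereas the paper writes out $C_B$, $E$ and $A_\phi$. Note that this parity holds for the smooth extensions in which the chords and their cross product are divided by $h$ before normalising; the polygon-defined angles are unchanged when the sample is reversed, so they are even in $h$.
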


\begin{proof}
Put \(m_i=s_i+h/2\). Symmetric Taylor expansion of the chord about \(m_i\) gives
\[
\gamma(m_i+h/2)-\gamma(m_i-h/2)
=hT(m_i)+\frac{h^3}{24}T''(m_i)+O(h^5).
\]
Since
\[
T''=-\kappa^2T+\kappa'N+\kappa\tau B,
\]
normalization yields
\begin{equation}
T_i^h
=
T(m_i)+\frac{h^2}{24}
\bigl(\kappa'N+\kappa\tau B\bigr)(m_i)+O(h^4).
\label{eq:chord-expansion}
\end{equation}
The neighboring chord tangents are centered at \(s_i-h/2\) and
\(s_i+h/2\). Expanding them symmetrically about \(s_i\) gives
\[
T_i^h-T_{i-1}^h=h\kappa(s_i)N(s_i)+O(h^3).
\]
Because the two vectors are unit,
\[
\|T_i^h-T_{i-1}^h\|
=2\sin\frac{\theta_i^h}{2},
\]
and therefore
\begin{equation}
x_i^h=\tan\frac{\theta_i^h}{2}
=\frac h2\kappa(s_i)+O(h^3).
\label{eq:x-asymptotic}
\end{equation}
Keeping one further term in the same symmetric Taylor calculation gives
\[
x_i^h=\frac h2\kappa(s_i)+h^3A_x(s_i)+O(h^4)
\]
for a smooth coefficient $A_x$. Its explicit expression is not needed below; only the smooth dependence of this coefficient on the cell is used. The $C^6$ hypothesis supplies uniform control of the remainder in this refinement and in the $O(h^4)$ torsion-angle expansion below.

We next compute the binormal more precisely.  Expanding the two adjacent
chord tangents about \(s_i\), taking their cross product, and normalizing
gives
\begin{equation}
B_i^h
=
B(s_i)+h^2C_B(s_i)+O(h^4),
\label{eq:B-asymptotic}
\end{equation}
where
\begin{equation}
C_B
=
-\frac{\kappa\tau}{6}T
-\frac{2\kappa'\tau+\kappa\tau'}{12\kappa}N.
\label{eq:CB-explicit}
\end{equation}
Notice in particular that \(C_B\perp B\), as required by the unit-length
constraint to this order.

To extract the signed torsion angle, write \(m=m_i\) and expand
\(B_i^h\) and \(B_{i+1}^h\) symmetrically about \(m\).  Using
\(B'=-\tau N\), the Frenet equations, \eqref{eq:chord-expansion},
and \eqref{eq:CB-explicit}, one obtains
\begin{align}
-\bigl\langle B_{i+1}^h,N_i^h\bigr\rangle
&=
h\tau(m)+h^3E(m)+O(h^4),
\label{eq:sinphi-expanded}\\
\bigl\langle B_{i+1}^h,B_i^h\bigr\rangle
&=
1-\frac{h^2}{2}\tau(m)^2+O(h^4),
\label{eq:cosphi-expanded}
\end{align}
where
\[
E=
\frac{\kappa^2\tau}{8}-\frac{\tau^3}{6}
+\frac{\tau''}{8}
+\frac{\kappa'\tau'}{6\kappa}
+\frac{\kappa''\tau}{6\kappa}
-\frac{(\kappa')^2\tau}{6\kappa^2}.
\]
Since our sign convention is
\[
\sin\phi_i^h=-\langle B_{i+1}^h,N_i^h\rangle,
\qquad
\cos\phi_i^h=\langle B_{i+1}^h,B_i^h\rangle,
\]
the \(\operatorname{atan2}\) expansion gives
\begin{equation}
\phi_i^h
=
h\tau(m_i)+h^3A_\phi(m_i)+O(h^4),
\label{eq:phi-refined}
\end{equation}
with
\[
A_\phi=
\frac{\kappa^2\tau}{8}
+\frac{\tau''}{8}
+\frac{\kappa'\tau'}{6\kappa}
+\frac{\kappa''\tau}{6\kappa}
-\frac{(\kappa')^2\tau}{6\kappa^2}.
\]
In particular,
\begin{equation}
\phi_i^h=h\tau(m_i)+O(h^3).
\label{eq:phi-asymptotic}
\end{equation}
The estimates \eqref{eq:x-asymptotic} and \eqref{eq:phi-asymptotic}, together with $\kappa\ge\kappa_0>0$ and $|\tau|\ge\tau_0>0$, imply after decreasing $h_0$ if necessary that $0<\theta_i^h<\pi$ and $\phi_i^h\notin\{0,\pi\}$ uniformly on the stated compact cells.

Consequently,
\[
\sin\phi_i^h=h\tau(m_i)+O(h^3),
\qquad
\cos\phi_i^h=1+O(h^2).
\]
Together with the refined $x_i^h$ expansion above and \eqref{eq:phi-refined}, symmetric midpoint expansion gives smooth functions $K$ and $L$ such that
\[
x_i^h+x_{i+1}^h\cos\phi_i^h
=
h\kappa(m_i)+h^3K(m_i)+O(h^4),
\]
and
\[
\sin\phi_i^h
=
h\tau(m_i)+h^3L(m_i)+O(h^4).
\]
Since \(|\tau|\ge\tau_0>0\), division yields
\begin{equation}
q_i^h:=
\frac{x_i^h+x_{i+1}^h\cos\phi_i^h}{\sin\phi_i^h}
=
\rho(m_i)+h^2S(m_i)+O(h^3),
\qquad
\rho=\frac{\kappa}{\tau},
\label{eq:q-refined}
\end{equation}
where
\[
S=\frac{K}{\tau}-\frac{\kappa L}{\tau^2}
\]
is smooth. In particular, $q_i^h=(\kappa/\tau)(m_i)+O(h^2)$, proving the stated second-order quotient consistency.

Since $m_{i+1}=m_i+h$,
\[
D_i^h=q_{i+1}^h-q_i^h
=\rho(m_i+h)-\rho(m_i)+O(h^2),
\]
and Taylor expansion yields
\[
\frac{D_i^h}{h}=\rho'(m_i)+O(h).
\]
If the underlying curve is a smooth general helix, then $\rho$ is constant and the refined expansion \eqref{eq:q-refined} gives
\[
D_i^h
=h^2\bigl(S(m_i+h)-S(m_i)\bigr)+O(h^3)
=O(h^3).
\]
Hence every smooth general helix satisfying the hypotheses has $D_i^h=O(h^3)$.
\end{proof}

\begin{corollary}[Second-order convergence of the locally reconstructed helical direction]\label{cor:axis-smooth-convergence}
Assume the hypotheses of Proposition~\ref{prop:smooth-consistency} and, in addition, that $\gamma$ is a smooth general helix on $J$. Orient its smooth helical direction $U$ so that $T\cdot U>0$, and write
\[
\rho=\frac{\kappa}{\tau},
\qquad
W=T+\rho B=\sec\alpha\,U,
\]
where $\rho$ is constant on $J$. For sufficiently small $h$, define the local binormal coefficient
\[
\widehat y_i^h
=
\frac{x_{i+1}^h+x_i^h\cos\phi_i^h}{\sin\phi_i^h}
\]
and the local reconstructed vector
\[
\widehat Q_i^h
=
T_i^h-x_i^hN_i^h+\widehat y_i^hB_i^h.
\]
Then, uniformly on compact cells contained in $J$,
\[
\widehat Q_i^h=W+O(h^2).
\]
Thus, with
\[
\widehat U_i^h=\frac{\widehat Q_i^h}{\|\widehat Q_i^h\|},
\qquad
\cos\widehat\alpha_i^h=\frac1{\|\widehat Q_i^h\|},
\]
one has
\[
\boxed{\ \|\widehat U_i^h-U\|=O(h^2),\qquad
\cos\widehat\alpha_i^h-\cos\alpha=O(h^2).\ }
\]
Moreover, the finite-step defect of Proposition~\ref{prop:smooth-consistency} satisfies $D_i^h=O(h^3)$ for a smooth general helix.
\end{corollary}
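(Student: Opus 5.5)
The plan is to compare the discrete vector $\widehat Q_i^h$ with the smooth conserved vector $W=T+\rho B$ evaluated at the sample point $s_i$. The discrete frame and the coefficients already have expansions in Proposition~\ref{prop:smooth-consistency}; the only additional input needed is the expansion of $\widehat y_i^h$.

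First I would expand the discrete frame at vertex $i$ about $s_i$. Averaging the chord-tangent expansion \eqref{eq:chord-expansion} for the chords centered at $s_i\pm h/2$ gives $T_i^h=T(s_i)+\tfrac h2\kappa N(s_i)+O(h^2)$. By \eqref{eq:B-asymptotic}, $B_i^h=B(s_i)+O(h^2)$, and then $N_i^h=B_i^h\times T_i^h=N(s_i)-\tfrac h2\kappa T(s_i)+O(h^2)$. Next, the mirror image of the computation leading to \eqref{eq:q-refined}, with the roles of $x_i^h$ and $x_{i+1}^h$ exchanged, gives
\[
\widehat y_i^h=\frac{x_{i+1}^h+x_i^h\cos\phi_i^h}{\sin\phi_i^h}=\rho(m_i)+O(h^2)=\rho+O(h^2),
\]
because $\rho$ is constant on $J$. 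The numerator is again $h\kappa(m_i)+O(h^3)$, since the expansion of $x_i^h+x_{i+1}^h$ is symmetric about $m_i$ and $\cos\phi_i^h=1+O(h^2)$.

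With these expansions, and using $x_i^h=\tfrac h2\kappa(s_i)+O(h^3)$,
\[
\widehat Q_i^h=T(s_i)+\tfrac h2\kappa N-\tfrac h2\kappa N+\rho B(s_i)+O(h^2)=W+O(h^2).
\]
The first-order terms cancel exactly. This cancellation is the discrete analogue of $W'=0$.

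The main obstacle is that this naive bookkeeping only tracks $O(h)$ cancellation, while errors of order $h^2$ in $T_i^h$ and $N_i^h$ do not obviously combine into something harmless. The conclusion $O(h^2)$ nevertheless only requires these errors to be $O(h^2)$, and that holds. I would verify it carefully: $T_i^h$ and $N_i^h$ must be accurate to $O(h^2)$, and $B_i^h$ and $\widehat y_i^h$ to $O(h^2)$. All four follow from the stated expansions, with uniform constants coming from the compactness of $J$, from $\kappa\ge\kappa_0$ and $|\tau|\ge\tau_0$, and from the $C^6$ hypothesis. An alternative route I would keep in reserve is to invoke Lemma~\ref{lem:Q-transition} and compare against the exact conserved vector of a nearby exactly helical polygon, but the direct expansion should suffice.

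Finally, $\|W\|=\sec\alpha$ is bounded away from zero, so normalization is Lipschitz near $W$. This gives $\|\widehat U_i^h-U\|=O(h^2)$, and similarly $1/\|\widehat Q_i^h\|-\cos\alpha=O(h^2)$. The statement $D_i^h=O(h^3)$ was already proved in Proposition~\ref{prop:smooth-consistency}.
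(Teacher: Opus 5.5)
Your proposal is correct and is essentially the paper's proof: the same vertex-based expansions of $T_i^h$, $N_i^h$, $B_i^h$ and $x_i^h$ give the first-order cancellation $T_i^h-x_i^hN_i^h=T(s_i)+O(h^2)$. For $\widehat y_i^h$, you expand the numerator directly, while the paper uses $\widehat y_i^h-q_i^h=(x_{i+1}^h-x_i^h)(1-\cos\phi_i^h)/\sin\phi_i^h=O(h^3)$; these are equivalent. One wording slip: $T_i^h$ is the single forward chord centered at $m_i=s_i+h/2$, so its expansion about $s_i$ comes from Taylor-expanding $T(m_i)$. It does not come from averaging the chords at $s_i\pm h/2$, which would give $T(s_i)+O(h^2)$ and lose the $\tfrac h2\kappa N$ term your cancellation needs, although the formula you wrote is the correct one.
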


\begin{proof}
Let
\[
q_i^h=\frac{x_i^h+x_{i+1}^h\cos\phi_i^h}{\sin\phi_i^h}.
\]
Proposition~\ref{prop:smooth-consistency} gives $q_i^h=\rho+O(h^2)$. Since
\[
\widehat y_i^h-q_i^h
=
\frac{(x_{i+1}^h-x_i^h)(1-\cos\phi_i^h)}{\sin\phi_i^h},
\]
and $x_{i+1}^h-x_i^h=O(h^2)$, $1-\cos\phi_i^h=O(h^2)$, and $\sin\phi_i^h=O(h)$ with $|\sin\phi_i^h|\ge c h$ on compact cells for small $h$, we obtain
\[
\widehat y_i^h=\rho+O(h^2).
\]
Next, \eqref{eq:B-asymptotic} gives $B_i^h=B(s_i)+O(h^2)$. Also \eqref{eq:chord-expansion}, expanded from $m_i=s_i+h/2$ back to $s_i$, yields
\[
T_i^h=T(s_i)+\frac h2\kappa(s_i)N(s_i)+O(h^2).
\]
Since $N_i^h=B_i^h\times T_i^h$,
\[
N_i^h=N(s_i)-\frac h2\kappa(s_i)T(s_i)+O(h^2).
\]
Together with $x_i^h=\frac h2\kappa(s_i)+O(h^3)$, these expansions give the cancellation
\[
T_i^h-x_i^hN_i^h=T(s_i)+O(h^2).
\]
Therefore
\[
\widehat Q_i^h
=T(s_i)+\rho B(s_i)+O(h^2)
=W+O(h^2).
\]
For a smooth general helix, $W'=\rho'B=0$, so $W$ is constant. Normalization is smooth near the nonzero vector $W$, which proves the two stated $O(h^2)$ reconstruction estimates. The last assertion, $D_i^h=O(h^3)$, follows from the refined estimate in Proposition~\ref{prop:smooth-consistency}.
\end{proof}

\begin{remark}[Why the second-order helical-direction accuracy is helix-specific]\label{rem:axis-order}
The second-order statements depend on uniform arclength sampling and on
the chord-tangent/midpoint convention used above. They also use the helix condition. For a general smooth curve, with $\rho=\kappa/\tau$, the same computation gives
\[
\widehat y_i^h=\rho(m_i)+O(h^2)
=\rho(s_i)+\frac h2\rho'(s_i)+O(h^2),
\]
and hence
\[
\widehat Q_i^h
=\bigl(T+\rho B\bigr)(s_i)
+\frac h2\rho'(s_i)B(s_i)+O(h^2).
\]
The quotient is centered at a midpoint, whereas the discrete frame is based at a vertex. For a general smooth curve this gives a nonzero first-order term. In Corollary~\ref{cor:axis-smooth-convergence}, the $O(h^2)$ estimate follows from $\rho'\equiv0$. The vectors $\widehat Q_i^h$ are local reconstructions and need not be independent of $i$ for a fixed $h$. Thus a chordal sample of a smooth general helix need not itself be an exact discrete general helix. Figure~\ref{fig:smooth-consistency} compares the two orders of convergence.
\end{remark}

\begin{figure}[!ht]
\centering
\resizebox{0.98\textwidth}{!}{\input{dgh-fig12.pgf}}
\caption{Smooth consistency, Proposition~\ref{prop:smooth-consistency} and Corollary~\ref{cor:axis-smooth-convergence}. Left: a smooth general helix with $\kappa=1+0.3\sin s$ and $\tau=0.6\,\kappa$, so that $\kappa/\tau$ is constant while $\kappa$ and $\tau$ are not, together with its uniform chord polygon for $h=0.5$ and the smooth helical direction $U$. Right: measured errors against $h$. The quotient $q_i^h$ and the reconstructed helical direction $\widehat U_i^h$ converge at order two, and the two reference lines have slopes $2$ and $3$. For every smooth general helix under the stated sampling assumptions, the refined quotient expansion gives $D_i^h=O(h^3)$; the slope-$3$ reference line shows this cubic rate and is not special to this example.}
\label{fig:smooth-consistency}
\end{figure}

\section{Conclusion}

For the non-orthogonal case, we use the vector
\[
Q_i=T_i-\tan(\theta_i/2)N_i+y_iB_i.
\]
Proposition~\ref{prop:conserved-vector} shows that the curve is a discrete general helix if and only if the coefficients $y_i$ can be chosen so that $Q_i$ is constant. Proposition~\ref{prop:spherical-characterization} gives the corresponding spherical description: the tangent indicatrix lies in a plane section of $S^2$. When $\phi_i\notin\{0,\pi\}$, eliminating these coefficients gives the $n-3$ equations of Theorem~\ref{thm:finite-step}. Theorems~\ref{thm:phase-parametrization} and~\ref{thm:linear-subspace} also cover the case $\phi_i=\pi$. The linear theorem says that the half-angle vector belongs to a torsion-dependent subspace of dimension at most two. Remark~\ref{rem:linear-test} gives a direct numerical test based on this criterion.

These formulas also determine the helical direction and angle and give the bound in Corollary~\ref{cor:turning-bound}. The helical direction is not always unique. Proposition~\ref{prop:torsion-realizability} shows that prescribed turning angles and prescribed torsion angles behave differently: arbitrary turning angles can be realized after choosing suitable torsion angles, whereas prescribed torsion angles may fail to be realizable. The edge lengths do not enter the exact condition. Example~\ref{ex:nonconstant-helix} has both nonconstant turning angles and nonconstant torsion angles. The orthogonal case is planar.

We also compare the present Frenet convention with the cross-ratio construction of M\"uller and Vaxman~\cite{MullerVaxman2021}. Their discrete curvature circle uses four consecutive points and is M\"obius invariant, whereas their discrete torsion for space curves is not M\"obius invariant. Here a Frenet frame and its turning angle are determined by three consecutive vertices. The condition $T_i\cdot U=c$ is linear in the Frenet-frame coefficients, which allows the coefficients $y_i$ to be eliminated.

The smooth calculation gives a consistency check for the discrete formulas. Under uniform arclength sampling,
\[
\frac{x_i^h+x_{i+1}^h\cos\phi_i^h}{\sin\phi_i^h}
=
\frac{\kappa}{\tau}\left(s_i+\frac h2\right)+O(h^2),
\]
and the normalized finite-step defect converges to $(\kappa/\tau)'$. If the smooth curve is a general helix, the locally reconstructed helical direction converges with order two and $D_i^h=O(h^3)$.

We assume $0<\theta_i<\pi$, since the chosen Frenet frame is not defined when consecutive tangents are parallel or antiparallel. The quotient formula also excludes $\phi_i=\pi$ because $\sin\phi_i$ occurs in the denominator. The phase, linear, and conserved-vector formulations still include this case. Remark~\ref{rem:cross-multiplication} shows why simple cross multiplication does not remove the restriction. Other discretizations of curvature and torsion may give different formulas.

\section*{Statements and Declarations}

\noindent\textbf{Funding.}
No funding was received to assist with the preparation of this manuscript.

\medskip
\noindent\textbf{Competing interests.}
The authors declare that they have no competing interests relevant to the content of this article.

\medskip
\noindent\textbf{Data availability.}
No datasets were generated or analysed during the current study.

\medskip
\noindent\textbf{Code availability.}
No custom software or code is required to support the mathematical results of this study.

\bigskip
\begin{sloppypar}

\end{sloppypar}

\end{document}